\algrenewcommand\algorithmicrequire{\textbf{Input:}}
\algrenewcommand\algorithmicensure{\textbf{Output:}}
\newtheorem{theorem}{Theorem}[section]
\newtheorem{lemma}[theorem]{Lemma}
\newtheorem{proposition}[theorem]{Proposition}
\theoremstyle{definition}
\newtheorem{definition}[theorem]{Definition}
\newtheorem{example}[theorem]{Example}
\theoremstyle{remark}
\newtheorem{remark}[theorem]{Remark}
\numberwithin{equation}{section}
\title[Shuffle theorem and sandpiles]{Shuffle theorems and sandpiles}
\author{Michele D'Adderio}
\address{Universit\`a di Pisa\\Dipartimento di Matematica\\ Largo Bruno Pontecorvo 5, 56127 Pisa\\ Italy}\email{michele.dadderio@unipi.it}
\author{Mark Dukes}
\address{University College Dublin\\ School of Mathematics and Statistics\\  Belfield, Dublin 4\\ Ireland}\email{mark.dukes@ucd.ie}
\author{Alessandro Iraci}
\address{Universit\`a di Pisa\\Dipartimento di Matematica\\ Largo Bruno Pontecorvo 5, 56127 Pisa\\ Italy}\email{alessandro.iraci@unipi.it}
\author{Alexander Lazar}
\address{Département de Mathématique\\ Université Libre de Bruxelles\\ Bruxelles, 1050\\ Belgique}\email{alexander.leo.lazar@ulb.be}
\author{Yvan Le Borgne}
\address{LaBRI\\ Université Bordeaux 1\\ 33405 Talence cedex\\ France}\email{yvan.le-borgne@u-bordeaux.fr}
\author{Anna Vanden Wyngaerd}
\address{Département de Mathématique\\ Université Libre de Bruxelles\\ Bruxelles, 1050\\ Belgique}\email{anna.vanden.wyngaerd@ulb.be}
\newcommand{\N}{\mathbb{N}}
\newcommand{\Z}{\mathbb{Z}}
\newcommand{\area}{\mathsf{area}}
\newcommand{\dinv}{\mathsf{dinv}}
\newcommand{\pmaj}{\mathsf{pmaj}}
\newcommand{\lev}{\mathsf{level}}
\newcommand{\del}{\mathsf{delay}}
\newcommand{\maj}{\mathsf{maj}}
\newcommand{\PF}{\mathsf{PF}}
\renewcommand{\th}{^\mathsf{th}}
\begin{document}

\begin{abstract}
	We provide an explicit description of the recurrent configurations of the sandpile model on a family of graphs $\widehat{G}_{\mu,\nu}$, which we call \emph{clique-independent} graphs, indexed by two compositions $\mu$ and $\nu$. Moreover, we define a \emph{delay} statistic on these configurations, and we show that, together with the usual \emph{level} statistic, it can be used to provide a new combinatorial interpretation of the celebrated \emph{shuffle theorem} of Carlsson and Mellit. More precisely, we will see how to interpret the polynomials $\langle \nabla e_n, e_\mu h_\nu \rangle$ in terms of these configurations.
\end{abstract}

\maketitle

\section{Introduction}


The celebrated \emph{shuffle theorem} of Carlsson and Mellit \cite{CarlssonMellitShuffle} provided a positive solution to a long-standing conjecture about a combinatorial formula for the Frobenius characteristic of the so-called diagonal harmonics. More precisely, this theorem gives the monomial expansion of the symmetric function $\nabla e_n$, where $e_n$ is the elementary symmetric function of degree $n$ in the variables $x_1,x_2,\dots$, and $\nabla$ is the famous \emph{nabla} operator introduced in \cite{Bergeron-Garsia-Haiman-Tesler-Positivity-1999}. In this formula, to each \emph{labelled Dyck path} of size $n$ corresponds a monomial, where the variables $x_1,x_2,\dots$ keep track of the labels, while the variables $q$ and $t$ keep track of the bistatistic ($\dinv$, $\area$).

In \cite{LoehrRemmel_pmaj} Loehr and Remmel provided an alternative combinatorial interpretation of the same symmetric function in terms of the same objects, but using the bistatistic ($\area$, $\pmaj$). In particular, they showed bijectively that the two combinatorial formulas coincide. In the present article we show that this last combinatorial formula has a natural interpretation in terms of the sandpile model.

\medskip


The \emph{(abelian) sandpile model} is a combinatorial dynamical system on graphs first introduced by Bak, Tang and Wiesenfeld \cite{BakTangWiesenfeld} in the context of ``self-organized criticality'' in statistical mechanics. The sandpile model (and variants of it) have found applications in a wide variety of mathematical contexts including enumerative combinatorics, tropical geometry, and Brill--Noether theory, among others: see \cite{Klivans_Sandpiles} for a nice introductory monograph. \textit{In the present article we only consider the sandpile model with a sink}.

A well-known link between the combinatorics of this dynamical system and that of the underlying graph is given by the so-called \emph{recurrent configurations} (see \Cref{def:recurrent}). For example, the recurrent configurations of the sandpile model are in bijection with the spanning trees of the graph (see e.g.\ \cite{CoriLeBorgne_Tutte}).

If the underlying graph presents some symmetries, then it is natural to look at the recurrent configurations modulo those symmetries. For example, for the complete graph we can identify recurrent configurations that are the same up to a permutation of the vertices (not moving the sink): perhaps not surprisingly, we still unearth some interesting combinatorics, as in this case we find Catalan many such sorted configurations.

More formally, consider the sandpile model on a graph $G$, and let $\mathsf{Aut}(G)$ be the automorphism group of $G$. Consider a subgroup $\Gamma$ of the stabilizer of the sink. Now $\Gamma$ acts naturally on the set $\mathsf{Rec}(G)$ of recurrent configurations: we are interested in the orbits of this action that we will call \emph{sorted recurrent configurations}.

\medskip


We will consider an explicit family of graphs $\widehat{G}_{\mu,\nu}$ indexed by pairs of compositions $\mu$ and $\nu$. For such a graph $\widehat{G}_{\mu,\nu}$ we will look at a subgroup $\Gamma$ of its automorphism group that will be isomorphic to the Young subgroup $\mathfrak{S}_\mu\times \mathfrak{S}_\nu$ of the symmetric group $\mathfrak{S}_n$, where $n=\lvert \mu \rvert + \lvert \nu \rvert$. We denote by $\mathsf{SortRec}(\mu;\nu)$ the set of corresponding sorted recurrent configurations of $\widehat{G}_{\mu,\nu}$.

For every recurrent configuration $\kappa$ of $\widehat{G}_{\mu,\nu}$, we will define a new statistic, called the \emph{delay} of $\kappa$ (denoted $\del(\kappa)$), which we will couple with the usual \emph{level} statistic (denoted $\lev(\kappa)$). To state our main result, we need a few more definitions.

Given a composition $\mu=(\mu_1,\mu_2,\dots)$, we denote by $e_\mu$ the product $e_{\mu_1}e_{\mu_2}\cdots$, and similarly $h_\mu=h_{\mu_1}h_{\mu_2}\cdots$, where $e_n$ and $h_n$ are the elementary and complete homogeneous symmetric function of degree $n$, respectively. Finally, we denote by $\langle - ,-\rangle$ the usual (Hall) scalar product on symmetric functions (see \cite[Chapter~1]{Haglund-Book-2008}).
\begin{theorem}
	\label{thm:main}
	For every pair of compositions $\mu$, $\nu$ such that $n = \lvert \mu \rvert + \lvert \nu \rvert$ we have
	\[\langle \nabla e_n, e_\mu h_\nu \rangle =\sum_{\kappa\in \mathsf{SortRec}(\mu;\nu)}q^{\lev(\kappa)}t^{\del(\kappa)}.\]
\end{theorem}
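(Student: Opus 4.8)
The plan is to prove the identity by reducing both sides to the same generating function over a combinatorially explicit family of labelled Dyck paths. On the symmetric-function side, I would start from the Loehr--Remmel form of the shuffle theorem, which expands $\nabla e_n$ as $\sum_{\pi} q^{\area(\pi)} t^{\pmaj(\pi)} x^{\pi}$, the sum running over labelled Dyck paths $\pi$ of size $n$ with $x^{\pi}$ the monomial recording the cars. Since $\nabla e_n$ is symmetric in the $x$ variables, pairing against $e_\mu h_\nu$ is a pure coefficient extraction: using the duality $\langle F, h_\lambda\rangle = [x_1^{\lambda_1}x_2^{\lambda_2}\cdots] F$ for the $h_\nu$ factor and the dual (strict) effect of the $e_{\mu}$ factor, the scalar product $\langle \nabla e_n, e_\mu h_\nu\rangle$ selects exactly those labelled Dyck paths whose car-content and column/row strictness are prescribed by the pair $(\mu,\nu)$. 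I would package these as a set $\mathsf{LD}(\mu;\nu)$ and record the intermediate identity
\[\langle \nabla e_n, e_\mu h_\nu\rangle = \sum_{\pi \in \mathsf{LD}(\mu;\nu)} q^{\area(\pi)} t^{\pmaj(\pi)}.\]

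The second ingredient is the explicit description of $\mathsf{Rec}(\widehat{G}_{\mu,\nu})$ established earlier. Here I would build an explicit bijection $\Phi\colon \mathsf{SortRec}(\mu;\nu) \to \mathsf{LD}(\mu;\nu)$ and verify that it respects the two statistics. The key observation is that the subgroup $\Gamma\cong \mathfrak{S}_\mu\times\mathfrak{S}_\nu$ matches the label structure of $\mathsf{LD}(\mu;\nu)$: quotienting a recurrent configuration by the $\mathfrak{S}_\nu$ factor produces the weakly increasing (repeatable) labels coming from the $h_\nu$ part, while quotienting by $\mathfrak{S}_\mu$ produces the strictly increasing (distinct) labels coming from the $e_\mu$ part. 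Reading a recurrent configuration off the clique-independent structure of $\widehat{G}_{\mu,\nu}$ should yield a Dyck path together with a compatible labelling, and I would check that $\Phi$ is well defined on $\Gamma$-orbits and invertible by reconstructing the toppling data from the path.

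It then remains to match the statistics, namely to show $\lev(\kappa) = \area(\Phi(\kappa))$ and $\del(\kappa) = \pmaj(\Phi(\kappa))$ for every sorted recurrent configuration $\kappa$. The level identity I expect to be the more routine of the two: level grades recurrent configurations by their total amount of sand (equivalently, by an external-activity statistic of the corresponding spanning tree), and on clique-independent graphs this grading should translate directly into the area of the associated path, so the equality would follow by a direct count once $\Phi$ is pinned down.

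The main obstacle will be the identity $\del = \pmaj$. The delay is defined through the sandpile dynamics (a burning/toppling order on the vertices), whereas $\pmaj$ is defined by a descent-type reading of the parking function, so the two are not visibly the same quantity. My plan is to show that the toppling order used to compute $\del$ reads the cars of $\Phi(\kappa)$ in exactly the order used to form the word whose major index is $\pmaj$, and then to prove by induction on the toppling sequence that each contribution to $\del$ coincides with the corresponding descent contribution to $\pmaj$. Controlling this correspondence --- in particular, checking that the sink-firing and stabilisation steps align with the column-by-column reading underlying $\pmaj$ --- is where the real work lies, and it is the step most likely to require the detailed combinatorial description of $\mathsf{Rec}(\widehat{G}_{\mu,\nu})$.
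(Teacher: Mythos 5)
Your plan takes essentially the same route as the paper: both start from the $(\area,\pmaj)$ form of the shuffle theorem, identify the parking functions selected by $e_\mu h_\nu$ (the paper encodes this via a shuffle condition on pmaj reading words), and build a statistic-preserving bijection $\Phi$ from sorted recurrent configurations, with $\lev \mapsto \area$ by a direct count and $\del \mapsto \pmaj$ by showing that the toppling order produced by the toppling algorithm is exactly the parking word of the image. The step you flag as the main obstacle is carried out in the paper by first passing from $\kappa$ to a configuration $\widetilde{\kappa}$ on the associated complete graph, characterizing the toppling data by the inequalities $0 \leq u_{\sigma^{-1}(i)} < w_{\sigma^{-1}(i)}$, and then invoking the Loehr--Remmel bijection, which already matches these data with the pair $(\area,\pmaj)$.
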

Notice that for $\mu=\varnothing$, the coefficient $\langle \nabla e_n, h_\nu  \rangle$ is simply the coefficient of $x^\nu=x_1^{\nu_1}x_2^{\nu_2}\cdots$ in $\nabla e_n$, hence this formula gives in particular a new combinatorial interpretation of the monomial expansion of the symmetric function $\nabla e_n$ in terms of the sandpile model.

The idea of the proof is to show that the sorted recurrent configurations with the ($\lev$, $\del$) bistatistic correspond bijectively to the labelled Dyck paths predicted by the shuffle theorem with the ($\area$, $\pmaj$) bistatistic.

\medskip


Notice that \Cref{thm:main} extends several previous results in the literature: the case $\widehat{G}_{\varnothing,(n)}$ was already worked out in \cite{CoriLeBorgne_Kn}, (a slight modification of) the case $\widehat{G}_{(m,n-m),\varnothing}$ already appears in \cite{ADDHL_Narayana,DukesLeBorgne_Kmn}, while the case $\widehat{G}_{(m),(n-m)}$ is dealt with in \cite{DeryckeDukesLeBorgneSplitgraphs}.

Other articles in which sorted recurrent configurations appear are for example \cite{ADDL_two_op} and \cite{DAdderioLeBorgne_Kmn}. It should be noticed that the works \cite{CoriLeBorgne_Kn} and \cite{DAdderioLeBorgne_Kmn} inspired the results in \cite{CoolsPanizzut_Kn} and \cite{CDJP_Kmn} respectively, which belong to tropical geometry and Brill-Noether theory.

We hope that the findings in the present article motivate further investigation of sorted recurrent configurations, and their relation to other parts of mathematics.

\medskip


The article is organized in the following way. In \Cref{sec:shufflethm} we recall the combinatorial definitions of the shuffle theorem with the ($\area$, $\pmaj$) bistatistic. In \Cref{sec:Gmunu} we introduce our clique-independent graphs, while in \Cref{sec:basics-sandpile} we recall some basic facts about the sandpile model. In \Cref{sec:algo} we introduce the statistic $\del$ via a toppling algorithm, which will be used in \Cref{sec:SortRec} to characterize the sorted recurrent configurations of our clique-independent graphs. Finally, in \Cref{sec:bijection} we provide a bijection between sorted configurations and parking functions, proving our main result.

\subsection*{Acknowledgments}

D'Adderio is partially supported by PRIN 2022A7L229 ALTOP and by INDAM research group GNAMPA. D'Adderio, Lazar and Vanden Wyngaerd are partially supported by ARC “From algebra to combinatorics, and back”. Le Borgne is partially supported by ANR Combin\'e ANR-19-CE48-0011.

\section{Combinatorics of the shuffle theorem}\label{sec:shufflethm}

For every $n\in \N$, we set $[n] \coloneqq \{1,2,\dots,n\}$. The pmaj statistic was first introduced in \cite{LoehrRemmel_pmaj}. The area statistic is the classical area statistic that has appeared in the many other papers concerning this topic.

\begin{definition}
	A \emph{Dyck path} of size $n$ is a lattice path going from $(0,0)$ to $(n,n)$, using only north and east steps and staying weakly above the line $x=y$ (also called the \emph{main diagonal}). A \emph{labelled Dyck path} is a Dyck path whose vertical steps are labelled with (not necessarily distinct) positive integers such that, when placing the labels in the square to the right of its step, the labels appearing in each column are strictly increasing from bottom to top. For us, a \emph{parking function}\footnote{These are in bijection with the functions $f \colon [n] \rightarrow [n]$ such that $ \# \{ 1 \leq j \leq n \mid f(j) \geq i \} \leq n+1-i$, by defining $f(i)$ to be the column containing the label $i$.} of size $n$ is a labelled Dyck path of size $n$ whose labels are precisely the elements of $[n]$. See \Cref{fig:LDP_config} for an example.
\end{definition}

The set of all parking functions of size $n$ is denoted by $\PF(n)$.

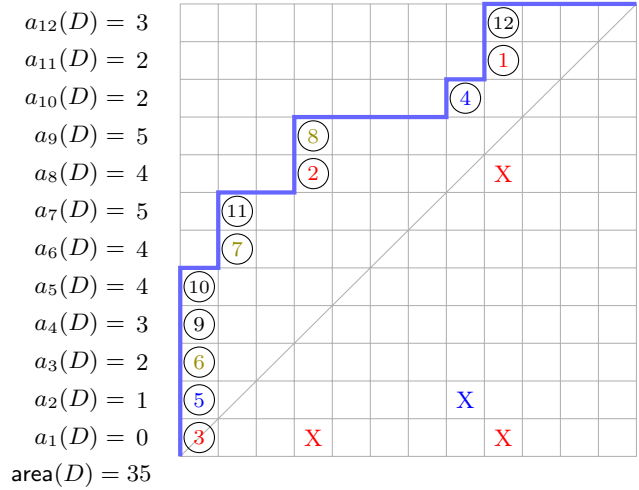
\begin{figure}[!ht]
	\centering

	\begin{tikzpicture}[scale=0.5]
		\draw[gray!60, thin] (0,0) grid (12,12) (0,0) -- (12,12);
		\draw[blue!60, line width = 1.6pt] (0,0)|-(1,5)|-(3,7)|-(7,9)|-(8,10)|-(12,12);

		\draw
		(0.5,0.5) circle(0.4 cm) node {\textcolor{red}{\footnotesize{$3$}}}
		(0.5,1.5) circle(0.4 cm) node {\textcolor{blue}{\footnotesize{$5$}}}
		(0.5,2.5) circle(0.4 cm) node {\textcolor{olive}{\footnotesize{$6$}}}
		(0.5,3.5) circle(0.4 cm) node {\footnotesize{$9$}}
		(0.5,4.5) circle(0.4 cm) node {\scriptsize{$10$}}
		(1.5,5.5) circle(0.4 cm) node {\textcolor{olive}{\footnotesize{$7$}}}
		(1.5,6.5) circle(0.4 cm) node {\scriptsize{$11$}}
		(3.5,7.5) circle(0.4 cm) node {\textcolor{red}{\footnotesize{$2$}}}
		(3.5,8.5) circle(0.4 cm) node {\textcolor{olive}{\footnotesize{$8$}}}
		(7.5,9.5) circle(0.4 cm) node {\textcolor{blue}{\footnotesize{$4$}}}
		(8.5,10.5) circle(0.4 cm) node {\textcolor{red}{\footnotesize{$1$}}}
		(8.5,11.5) circle(0.4 cm) node {\scriptsize{$12$}}
		(3.5,0.5) node {\textcolor{red}{\small{X}}}
		(8.5,0.5) node {\textcolor{red}{\small{X}}}
		(8.5,7.5) node {\textcolor{red}{\small{X}}}
		(7.5,1.5) node {\textcolor{blue}{\small{X}}};
		\foreach \i in {1,...,12}{
				\draw node[anchor=east] at (-1.2,\i-0.5) {\small $a_{\i}(D)=$};
			}
		\foreach \i/\j in {1/0,2/1,3/2,4/3,5/4,6/4,7/5,8/4,9/5,10/2,11/2,12/3}{
				\draw node at (-1,\i-0.5) {\small$\j$};
			}
		\draw node[anchor=east] at (-0.5,-0.5) {\small $\area(D) = 35$};
	\end{tikzpicture}

	\caption{An element $D$ of $\PF((4,3);(3,2))$. The colors and crosses are explained in Example~\ref{ex:bijection}.}
	\label{fig:LDP_config}
\end{figure}

\begin{definition}
	Given $D\in \PF(n)$, we define its \emph{area word} to be the string $a(D) = a_1(D) \cdots a_n(D)$ where $a_i(D)$ is the number of whole squares in the $i\th$ row (from the bottom) between the path and the main diagonal. We define the \emph{area} of $D$ as \[ \area(D) \coloneqq \sum_{i=1}^n a_i(D). \]
\end{definition}

\begin{example}
	\label{ex:area}
	The area word of the path in \Cref{fig:LDP_config} is $012344545223$ and its area is $35$.
\end{example}

To introduce the other statistic, we need a couple of definitions.

\begin{definition}
	Let $a_1 a_2 \cdots a_k$ be a word of integers. We define its \emph{descent set} \[ \mathsf{Des}(a_1 a_2 \cdots a_k) \coloneqq \{ 1 \leq i \leq k-1 \mid a_i > a_{i+1} \} \] and its \emph{major index} $\mathsf{maj}(a_1 a_2 \cdots a_k)$ as the sum of the elements of the descent set.
\end{definition}

\begin{definition}
	Let $D \in \PF(n)$. We define its \emph{parking word} $p(D)$ as follows.

	Let $C_1$ be the set containing the labels appearing in the first column of $D$, and let $p_1(D) \coloneqq \max C_1$. At step $i$, let $C_i$ be the set obtained from $C_{i-1}$ by removing $p_{i-1}(D)$ and adding all the labels in the $i\th$ column of the $D$; let \[ p_i(D) \coloneqq \max \, \{x \in C_i \mid x \leq p_{i-1}(D) \} \] if this last set is non-empty, and $p_i(D) \coloneqq \max \, C_i$ otherwise. We finally define the parking word of $D$ as $p(D) \coloneqq p_1(D) \cdots p_n(D)$.
\end{definition}

\begin{definition}
	We define the statistic \emph{pmaj} on $D \in \PF(n)$ as \[ \pmaj(D) \coloneqq \mathsf{maj}(p_n(D) \cdots p_1(D)). \]
\end{definition}

\begin{example}
	\label{ex:parking_word}
	For example, the parking word of the parking function $D$ in \Cref{fig:LDP_config} is $\overline{1\!0} 9 7 6 5 3 2 \overline{1\!1} 8 4 1 \overline{1\!2}$\footnote{\label{note_bars}We put a bar on the two-digit numbers not to confuse them.}. In fact, we have $C_1 = \{3,4,5,6,9,\overline{1\!0}\}$, $C_2 = \{3,4,5,6,9,7,\overline{1\!1}\}$, $C_3 = \{3,4,5,6,7,\overline{1\!1}\}$, and so on. The descent set of the reverse is $\{1,5\}$, so $\pmaj(D) = 6$. 
\end{example}

\begin{definition}
	For $D \in \PF(n)$ we set $l_i(D)$ to be the label of the $i\th$ vertical step. Then the \emph{pmaj reading word} of $D$ is the sequence $l_1(D) \cdots l_n(D)$, i.e. the sequence of the labels read bottom to top.
\end{definition}

For example, the labelled Dyck path in \Cref{fig:LDP_config} has pmaj reading word $\textcolor{red}{3} \textcolor{blue}{5} \textcolor{olive}{6} 9 \overline{1\!0} \textcolor{olive}{7} \overline{1\!1} \textcolor{red}{2} \textcolor{olive}{8} \textcolor{blue}{4} \textcolor{red}{1} \overline{1\!2}$.

Given two compositions $\mu = (\mu_1,\mu_2,\dots)$ and $\nu = (\nu_1,\nu_2,\dots)$ with $\lvert \mu \rvert + \lvert \nu \rvert=n$, let $K_{\mu_1} = \{n,n-1,\dots, n-\mu_1+1\}$, $K_{\mu_2} = \{n-\mu_1, n-\mu_1-1,\dots,n-\mu_1-\mu_2+1\}$, and so on, and let $I_{\nu_1} = \{1,2,\dots, \nu_1\}$, $I_{\nu_2} = \{\nu_1+1,\nu_1+2,\dots, \nu_1+\nu_2\}$, and so on. Notice that the sets $K_{\mu_1}, K_{\mu_2}, \dots$, $I_{\nu_1}, I_{\nu_2}, \dots$ form a partition of $[n]$.

Now let $\uparrow\!\! K_{\mu_i}$ be the word consisting of the elements of $K_{\mu_i}$ in increasing order: for example $\uparrow\!\! K_{\mu_1}=(n-\mu_1+1) (n-\mu_1+2) \cdots (n-1) n$.
Similarly, let $\downarrow\!\! I_{\nu_j}$ be the word consisting of the elements of $I_{\nu_j}$ in decreasing order: for example $\downarrow\!\! I_{\nu_1}=\nu_1 (\nu_1-1)\cdots 21$.

Consider the shuffle \[ \mathsf{W}(\mu;\nu) \coloneqq \uparrow\!\! K_{\mu_1} \shuffle \uparrow\!\! K_{\mu_2} \shuffle \cdots \shuffle \uparrow\!\! K_{\mu_{\ell(\mu)}} \shuffle \downarrow\!\! I_{\nu_1} \shuffle \downarrow\!\! I_{\nu_1} \shuffle \cdots \shuffle \downarrow\!\! I_{\nu_{\ell(\nu)}}, \]
which we can think of as a set of permutations in $\mathfrak{S}_n$ in one-line notation. Let $\PF(\mu;\nu)$ be the set of parking functions whose pmaj reading word is in $\mathsf{W}(\mu;\nu)$.

For example\footref{note_bars}, $\mathsf{W}((4,3);(3,2)) = 9 \overline{1\!0}\,  \overline{1\!1}\, \overline{1\!2} \shuffle \textcolor{olive}{678} \shuffle \textcolor{blue}{54} \shuffle \textcolor{red}{321}$, and the pmaj reading word of the parking function $D$ in \Cref{fig:LDP_config} belongs to it, so that $D \in \PF((4,3);(3,2))$.

We can now state the shuffle theorem in the form that is suitable for our purposes: this is a combination of the main results in \cite{CarlssonMellitShuffle} and \cite{LoehrRemmel_pmaj} together with the \emph{superization}: see \cite[Chapter~6]{Haglund-Book-2008}.
\begin{theorem}
	\label{thm:shuffle_pmaj}
	For every pair of compositions $\mu$ and $\nu$ with $\lvert \mu \rvert + \lvert \nu \rvert=n$ we have
	\[\langle \nabla e_n, e_\mu h_\nu \rangle = \sum_{D \in \PF(\mu;\nu)} q^{\area(D)} t^{\pmaj(D)}.\]
\end{theorem}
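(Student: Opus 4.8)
The plan is to assemble \Cref{thm:shuffle_pmaj} from three known ingredients, exactly as the surrounding text indicates: the Carlsson--Mellit shuffle theorem, the Loehr--Remmel $(\area,\pmaj)$ reformulation, and the superization machinery of \cite[Chapter~6]{Haglund-Book-2008}. None of these needs to be reproved; the work lies entirely in threading them together with consistent conventions. I would begin at the level of \emph{word parking functions}, i.e.\ labelled Dyck paths whose labels are arbitrary positive integers, strictly increasing up each column. The shuffle theorem gives $\nabla e_n = \sum_\pi q^{\dinv(\pi)} t^{\area(\pi)} x^\pi$, where $x^\pi$ records the multiset of labels, and Loehr--Remmel establish the companion identity $\nabla e_n = \sum_\pi q^{\area(\pi)} t^{\pmaj(\pi)} x^\pi$, the sum again ranging over all word parking functions of size $n$. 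Since $\nabla e_n$ is a genuine symmetric function, both expansions hold as symmetric function identities, and it is the second one that I would carry forward.

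Next I would extract the scalar product $\langle \nabla e_n, e_\mu h_\nu\rangle$. The crucial point is that the $(\area,\pmaj)$ formula over word parking functions is the \emph{superization} of the corresponding formula over \emph{standard} parking functions (labels exactly $[n]$): equivalently, the standard formula can be written as a positive sum of fundamental quasisymmetric functions indexed by the descent data of the pmaj reading word, and the word-parking-function expansion is recovered by the usual substitution. The superization principle then computes $\langle \nabla e_n, e_\mu h_\nu\rangle$ by assigning the $h_{\nu_j}$ factors to the blocks $I_{\nu_j}$ of \emph{small} values and the $e_{\mu_i}$ factors to the blocks $K_{\mu_i}$ of \emph{large} values, subject to the rule that in the pmaj reading word the letters of each $h$-block appear in decreasing order (giving $\downarrow\!\! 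I_{\nu_j}$) while the letters of each $e$-block appear in increasing order (giving $\uparrow\!\! K_{\mu_i}$). A standard parking function survives the extraction precisely when its pmaj reading word is one of the resulting shuffles, that is, when it lies in $\mathsf{W}(\mu;\nu)$ — which is exactly the defining condition of $\PF(\mu;\nu)$.

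It then remains to confirm that the surviving weight is $q^{\area} t^{\pmaj}$ on the nose. Because the superization respects the bistatistic — both $\area$ and $\pmaj$ are preserved when a word parking function is standardized within the prescribed block structure — each admissible term contributes $q^{\area(D)} t^{\pmaj(D)}$ for a unique $D \in \PF(\mu;\nu)$. Summing over all such $D$ yields $\sum_{D \in \PF(\mu;\nu)} q^{\area(D)} t^{\pmaj(D)}$, completing the derivation.

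I expect the main obstacle to be bookkeeping rather than conceptual: one must fix, and keep consistent throughout, four interlocking conventions, namely (i) which of $\mu,\nu$ contributes increasing versus decreasing runs, (ii) the assignment of large values to the $K$-sets and small values to the $I$-sets, (iii) which statistic is tracked by $q$ and which by $t$ after passing from the Carlsson--Mellit to the Loehr--Remmel form, and (iv) that the descent set entering the fundamental quasisymmetric expansion is read off the pmaj reading word and no other reading word. The most delicate verification is that $\pmaj$ is genuinely a superizable (standardization-compatible) statistic, so that the word-parking-function expansion really is the superization of the standard one; this is implicit in the Loehr--Remmel framework, and making it explicit for $\pmaj$ is where the care is required.
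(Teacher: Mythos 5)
Your proposal is correct and matches the paper's approach exactly: the paper offers no independent proof of \Cref{thm:shuffle_pmaj}, presenting it only as a combination of the Carlsson--Mellit shuffle theorem, the Loehr--Remmel $(\area,\pmaj)$ reformulation, and the superization of \cite[Chapter~6]{Haglund-Book-2008}, which are precisely the three ingredients you assemble with the correct conventions ($e$-blocks increasing, $h$-blocks decreasing, descents read off the pmaj reading word).
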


\section{The clique-independent graphs \texorpdfstring{$\widehat{G}_{\mu,\nu}$}{Gmunu}}
\label{sec:Gmunu}

\begin{definition}
	Let $\mu, \nu$ be two compositions (i.e.\ tuples of positive integers). Set $n=\lvert \mu \rvert + \lvert \nu \rvert$. We define a graph $G_{\mu,\nu}$ with set of vertices $[n]$ consisting of the following \emph{components}\footnote{Notice that the notation is consistent with the one used in \Cref{sec:shufflethm}.}:
	\begin{itemize}
		\item $\ell(\mu)$ \emph{clique components}, i.e.\ complete graphs, $K_{\mu_1}, K_{\mu_2}, \dots$, on $\mu_1, \mu_k, \dots$ vertices respectively. The vertices of $K_{\mu_1}$ are $n,n-1,\dots,n-\mu_1+1$; the vertices of $K_{\mu_2}$ are $n-\mu_1,n-\mu_1-1,\dots,n-\mu_1-\mu_2+1$; and so on.
		\item $\ell(\nu)$ \emph{independent components}, i.e.\ graphs without edges, $I_{\nu_1}, I_{\nu_2}, \dots$, on $\nu_1, \nu_2, \dots$ vertices respectively; the vertices of $I_{\nu_1}$ are $1,2,\dots,\nu_1$; the vertices of $I_{\nu_2}$ are $\nu_1+1,\nu_1+2,\dots,\nu_1+\nu_2$; and so on.
	\end{itemize}
	Finally, two vertices in distinct components are always connected by an edge.
\end{definition}

\begin{example}
	If $\mu = \varnothing$, then $G_{\varnothing,\nu}$ is the complete multipartite graph $K_{\nu_1,\nu_2,\dots}$. If $\nu = \varnothing$, then $G_{\mu,\varnothing}$ is isomorphic to the complete graph $K_{\lvert \mu \rvert}$; however, for our purposes we will distinguish between $G_{(\lvert \mu \rvert),\varnothing}$ and $G_{(\mu_1,\mu_2,\dots),\varnothing}$, as we will consider the action of different groups of automorphisms, which will lead to different sorted configurations.
\end{example}

Given one of our labelled graphs $G_{\mu,\nu}$, we define the graph $\widehat{G}_{\mu,\nu}$ simply as $G_{\mu,\nu}$ to which we add a vertex $0$, and we connect it with every other vertex. We will refer to the \emph{clique} and the \emph{independent components} of $\widehat{G}_{\mu,\nu}$ as the corresponding components of $G_{\mu,\nu}$ seen as subgraphs of $\widehat{G}_{\mu,\nu}$. We will consider the sandpile model on $\widehat{G}_{\mu,\nu}$, where $0$ is the sink. \Cref{fig:graph-munu} is an illustration of the graph $\widehat{G}_{(4,\textcolor{olive}{3}),(\textcolor{red}{3},\textcolor{blue}{2})}$.

\begin{figure}
	\centering
	\def\radius{4.8}
	\def\mythick{1.4pt}
	\begin{tikzpicture}[every node/.style={circle,thick,draw,inner sep=1.6pt},scale=0.8]
		\coordinate (0) at (0,0) {};
		\foreach \i in {1,...,12}{
				\coordinate (\i) at (60 + \i*30:\radius) {};
			}
		\def\col{gray!40}
		\foreach \i in {1,...,12}{
				\foreach \j in {\i,...,12}{
						\ifthenelse{\(\i<4 \AND \j<4\) \OR \(\i = 4 \AND \j = 5\)}{\def\col{white}}{}
						\draw[\col] (\i) -- (\j);
					}}
		\foreach \i in {6,...,8}{
				\foreach \j in {\i,...,8}{
						\draw[line width = \mythick,olive] (\i) -- (\j);
					}
			}
		\foreach \i in {9,...,12}{
				\foreach \j in {\i,...,12}{
						\draw[line width = \mythick] (\i) -- (\j);
					}
			}
		\def\col{red}
		\foreach \i in {1,...,12}{
				\draw[gray] (\i)--(0);
				\ifnum\i>3\relax \def\col{blue}\fi
				\ifnum\i>5\relax \def\col{olive}\fi
				\ifnum\i>8\relax \def\col{black}\fi
				\node[\col, fill = white] at (\i) {$\i$};
			}
		\node[gray, fill = white] at (0) {$0$};
	\end{tikzpicture}
	\caption{The graph $\widehat{G}_{(4, \textcolor{olive}{3}), (\textcolor{red}{3}, \textcolor{blue}{2})}$. The vertices $\textcolor{red}{1}$, $\textcolor{red}{2}$ and $\textcolor{red}{3}$ have degree $10$, the vertices $\textcolor{blue}{4}$ and $\textcolor{blue}{5}$ have degree $11$, and all the other vertices have degree $12$.}
	\label{fig:graph-munu}
\end{figure}

\section{Basics of the sandpile model}
\label{sec:basics-sandpile}

In the present work by a \emph{graph} we will always mean a simple graph, i.e.\ a graph with no loops and no multiple edges.

\begin{definition}
	Let $G$ be a finite, undirected graph on the vertex set $\{0,1,\dots,n\}$.

	A \emph{configuration} of the \emph{sandpile (model)} on $G$ is a map $\kappa \colon \{0\} \cup [n] \to \Z$ that assigns a (integer) number of ``grains of sand'' to each nonzero vertex of $G$.

	If $0 \leq \kappa(v)$ we say that $v$ is \emph{non-negative}. If $\kappa(v) \leq \deg(v)$, we say that $v$ is \emph{stable}, and otherwise it is \emph{unstable}. Any vertex can \emph{topple} (or \emph{fire}), and ``donate a single grain'' to each of its neighbors: the result is a new configuration $\kappa'$ in which $\kappa'(v) = \kappa(v) - \deg(v)$ and for any $w \neq v$
	\[\kappa'(w) =
		\begin{cases}
			\kappa(w) + 1, & \text{if }(v,w) \text{ is an edge} \\
			\kappa(w),     & \text{otherwise.}
		\end{cases}
	\]
	For any $v \in \{0,\dots,n\}$ we write $\phi_v$ for the \emph{toppling operator} at vertex $v$. That is $\phi_v(\kappa)$ is a new configuration obtained from $\kappa$ by toppling the vertex $v$.

	The vertex $0$ is special in this model, and we call it the \emph{sink}, while we call all the others \emph{non-sink} vertices. We say that a configuration $\kappa$ is \emph{non-negative} if all of its non-sink vertices are non-negative, \emph{stable} if all of its non-sink vertices are stable, and \emph{unstable} if at least one of its non-sink vertices is unstable.
\end{definition}

\begin{remark}
	Notice that the notion of stable configuration has no dependency on the value on the sink. Therefore, as it is customary, we will ignore the value of a configuration on the sink, and consider the configurations as restricted to the non-sink vertices. Moreover, we will identify every configuration $\kappa$ with the word $\kappa(n) \kappa(n-1) \cdots \kappa(2) \kappa(1)$.
\end{remark}

\begin{example}
	\label{ex:kappa_config}
	Consider the graph $\widehat{G}_{(4,\textcolor{olive}{3}),(\textcolor{red}{3},\textcolor{blue}{2})}$ (see \Cref{fig:graph-munu}), whose vertices are $\{0\} \cup [12]$, and let $0$ be the sink. The configuration $\kappa$ given by\footref{note_bars}  $3 \overline{1\!0}\, \overline{1\!1}\, \overline{1\!1} \textcolor{olive}{8 \overline{1\!0}\, \overline{1\!1}}\, \textcolor{blue}{\overline{1\!0} 4} \textcolor{red}{9 7 3}$ is a stable configuration. We provide below some examples of such topplings:
	\begin{align*}
		\phi_0(\kappa)
		 & = 4 \overline{1\!1}\, \overline{1\!2}\, \overline{1\!2} \textcolor{olive}{9 \overline{1\!1}\, \overline{1\!2}}\, \textcolor{blue}{\overline{1\!1} 5} \textcolor{red}{\overline{1\!0} 8 4},    \\
		(\phi_{\overline{1\!0}}\!\circ\!\phi_0)(\kappa)
		 & = 5 \overline{1\!2} 0 \overline{1\!3}\, \textcolor{olive}{\overline{1\!0}\, \overline{1\!2}\, \overline{1\! 3}}\, \textcolor{blue}{\overline{1\!2} 6} \textcolor{red}{\overline{1\!1} 9 5},   \\
		(\phi_{9}\!\circ\!\phi_{\overline{1\!0}}\!\circ\!\phi_0)(\kappa)
		 & = 6 \overline{1\!3}\, 1 1 \textcolor{olive}{\overline{1\!1}\, \overline{1\!3}\, \overline{1\! 4}}\, \textcolor{blue}{\overline{1\!3} 7} \textcolor{red}{\overline{1\!2}\, \overline{1\!0} 6}, \\
		(\phi_{7}\!\circ\! \phi_{9}\!\circ\!\phi_{\overline{1\!0}}\!\circ\!\phi_0)(\kappa)
		 & = 7 \overline{1\!4}\, 2 2 \textcolor{olive}{\overline{1\!2} 1 \overline{1\! 5}}\, \textcolor{blue}{\overline{1\! 4} 8} \textcolor{red}{\overline{1\! 3}\, \overline{1\!1} 7},                 \\
		(\phi_{6}\!\circ\!\phi_{7}\!\circ\! \phi_{9}\!\circ\!\phi_{\overline{1\!0}}\!\circ\!\phi_0)(\kappa)
		 & = 8 \overline{1\!5}\, 3 3 \textcolor{olive}{\overline{1\!3} 2 3} \textcolor{blue}{\overline{1\!5} 9} \textcolor{red}{\overline{1\!4}\, \overline{1\!2} 8},                                    \\
		(\phi_{5}\!\circ\!\phi_{6}\!\circ\!\phi_{7}\!\circ\! \phi_{9}\!\circ\!\phi_{\overline{1\!0}}\!\circ\!\phi_0)(\kappa)
		 & = 9 \overline{1\!6}\, 4 4 \textcolor{olive}{\overline{1\!4} 3 4} \textcolor{blue}{4 9} \textcolor{red}{\overline{1\!5}\, \overline{1\!3} 9},                                                  \\
		(\phi_{3}\!\circ\!\phi_{5}\!\circ\!\phi_{6}\!\circ\!\phi_{7}\!\circ\!\phi_{9}\!\circ\!\phi_{\overline{1\!0}}\!\circ\!\phi_0)(\kappa)
		 & = \overline{1\!0}\, \overline{1\!7}\, 5 5 \textcolor{olive}{\overline{1\!5} 4 5} \textcolor{blue}{5 \overline{1\!0}} \textcolor{red}{5 \overline{1\!3} 9},                                    \\
		(\phi_{2}\!\circ\!\phi_{3}\!\circ\!\phi_{5}\!\circ\!\phi_{6}\!\circ\!\phi_{7}\!\circ\!\phi_{9}\!\circ\!\phi_{\overline{1\!0}}\!\circ\!\phi_0)(\kappa)
		 & = \overline{1\!1}\, \overline{1\!8}\, 6 6 \textcolor{olive}{\overline{1\!6} 5 6} \textcolor{blue}{6 \overline{1\!1}} \textcolor{red}{5 3 9}.
	\end{align*}
\end{example}

\begin{definition}
	\label{def:recurrent}
	Let $\kappa$ be a stable configuration, and consider the configuration $\phi_0(\kappa)$. We say that $\kappa$ is \emph{recurrent}\footnote{In the literature ``recurrent'' is sometimes used in a broader sense than in this paper. Configurations that are recurrent in our sense are called \emph{critical} in these settings. The notion of a recurrent configuration comes from a Markov chain on sandpile configurations not detailed here. The algorithmic characterisation that we used here as definition is called the \emph{Dhar criterion}.} if from $\phi_0(\kappa)$ there is an order of all the non-sink vertices such that by toppling the vertices in that precise order we always stay non-negative. Of course at the end of this sequence of topplings we will be back to $\kappa$, since each edge will be crossed by a single grain in each direction. More precisely, a stable configuration $\kappa$ is recurrent if there is a permutation $\sigma=\sigma_1\sigma_2\cdots \sigma_n\in \mathfrak{S}_n$ such that
	\[\phi_0(\kappa),(\phi_{\sigma(1)}\circ \phi_0)(\kappa),(\phi_{\sigma(2)}\circ\phi_{\sigma(1)}\circ \phi_0)(\kappa),\dots,(\phi_{\sigma(n)}\circ \cdots \circ \phi_{\sigma(1)}\circ \phi_0)(\kappa) = \kappa\]
	are all non-negative configurations. In this case, $\sigma$ is the \emph{toppling word} of this sequence of topplings, and we say that this sequence \emph{verifies the recurrence} of $\kappa$.
\end{definition}

\begin{example} \label{ex:verify_recurrence}
	The configuration $\kappa = 3 \overline{1\!0}\, \overline{1\!1}\, \overline{1\!1} \textcolor{olive}{8 \overline{1\!0}\, \overline{1\!1}}\, \textcolor{blue}{\overline{1\!0} 4} \textcolor{red}{9 7 3}$ is a recurrent configuration for $\widehat{G}_{(4,\textcolor{olive}{3}),(\textcolor{red}{3},\textcolor{blue}{2})}$: indeed it is easy to check that $\sigma = \overline{1\!0} 9 7 6 5 3 2 \overline{1\!1} 8 4 1 \overline{1\!2}$ verifies the recurrence of $\kappa$ (cf.\ \Cref{ex:parking_word}).
\end{example}

\begin{remark}\label{rem:recurrence_definition}
	It is well known (see e.g.\ \cite[Theorem~2.4]{ADDL_two_op}) that the condition for $\kappa$ to be recurrent is equivalent to say that starting from $\phi_0(\kappa)$ there is no proper (possibly empty) subset $A$ of $[n]$ such that toppling all the vertices of $A$ brings $\phi_0(\kappa)$ to a stable configuration.
\end{remark}

\begin{definition}
	Given a recurrent configuration $\kappa$ of $G$, we define its \emph{level} as
	\[\lev(\kappa) \coloneqq - \lvert E_s(G) \rvert + \sum_{i=1}^n \kappa(i)\]
	where $E_s(G)$ is the set of edges of $G$ that are not incident to the sink.
\end{definition}

It is well-known that $\lev(\kappa)\geq 0$, and there exists a recurrent configuration of level $0$ if $G$ is connected \cite{MerinoLopez}.

\begin{remark}
	\label{rmk:level_Gmunu}
	For $\widehat{G}_{\mu,\nu}$ with $\lvert \mu \rvert + \lvert \nu \rvert = n$, (by removing from the complete graph on $\{0\}\cup [n]$ all the edges between vertices in the same independent component) we have
	\[ \lvert E_s(\widehat{G}_{\mu,\nu}) \rvert =\binom{n}{2} - \sum_{i \geq 0} \binom{\nu_i}{2}.\]
\end{remark}

\begin{example}
	\label{ex:level}
	The configuration  $\kappa = 3 \overline{1\!0}\, \overline{1\!1}\, \overline{1\!1} \textcolor{olive}{8 \overline{1\!0}\, \overline{1\!1}}\, \textcolor{blue}{\overline{1\!0} 4} \textcolor{red}{9 7 3}$ for $\widehat{G}_{(4,\textcolor{olive}{3}),(\textcolor{red}{3},\textcolor{blue}{2})}$ has level
	\[ \lev(\kappa) = -\binom{12}{2} + \binom{\textcolor{red}{3}}{2} + \binom{\textcolor{blue}{2}}{2} + 97 = 35.\]
\end{example}

\section{The toppling algorithm and the \texorpdfstring{$\del$}{delay} statistic}
\label{sec:algo}

Consider the sandpile model on a graph $G$ with vertices $\{0\}\cup [n]$, where $0$ is the sink. Let $\kappa$ be a recurrent configuration of $G$. Consider \Cref{alg:toppling}. Before discussing the algorithm, let us look at an example.
\begin{example}
	\label{ex:algo}
	Consider again the configuration $\kappa$ from \Cref{ex:kappa_config}: in that example we actually computed the sequence of topplings given by the first iteration of the \textbf{for} loop of \Cref{alg:toppling} applied to $\kappa$. We compute the second iteration of the \textbf{for} loop:
	\begin{align*}
		(\phi_{\overline{1\!1}}\!\circ\!\phi_{2}\!\circ\!\phi_{3}\!\circ\!\phi_{5}\!\circ\!\phi_{6}\!\circ\!\phi_{7}\!\circ\! \phi_{9}\!\circ\!\phi_{\overline{1\!0}}\!\circ\!\phi_0)(\kappa)
		 & =\overline{1\!2}6 7 7 \textcolor{olive}{\overline{1\! 7} 6 7} \textcolor{blue}{7 \overline{1\!2}} \textcolor{red}{6 4 \overline{1\!0}},   \\
		(\phi_{8}\!\circ\!\phi_{\overline{1\!1}}\!\circ\!\phi_{2}\!\circ\!\phi_{3}\!\circ\!\phi_{5}\!\circ\!\phi_{6}\!\circ\!\phi_{7}\!\circ\! \phi_{9}\!\circ\!\phi_{\overline{1\!0}}\!\circ\!\phi_0)(\kappa)
		 & =\overline{1\! 3}7 8 8 \textcolor{olive}{5 7 8} \textcolor{blue}{8 \overline{1\! 3}} \textcolor{red}{7 5 \overline{1\!1}},                \\
		(\phi_{4}\!\circ\!\phi_{8}\!\circ\!\phi_{\overline{1\!1}}\!\circ\!\phi_{2}\!\circ\!\phi_{3}\!\circ\!\phi_{5}\!\circ\!\phi_{6}\!\circ\!\phi_{7}\!\circ\! \phi_{9}\!\circ\!\phi_{\overline{1\!0}}\!\circ\!\phi_0)(\kappa)
		 & =\overline{1\! 4}8 9 9 \textcolor{olive}{6 8 9} \textcolor{blue}{8 2} \textcolor{red}{8 6 \overline{1\!2}},                               \\
		(\phi_{1}\!\circ\!\phi_{4}\!\circ\!\phi_{8}\!\circ\!\phi_{\overline{1\!1}}\!\circ\!\phi_{2}\!\circ\!\phi_{3}\!\circ\!\phi_{5}\!\circ\!\phi_{6}\!\circ\!\phi_{7}\!\circ\! \phi_{9}\!\circ\!\phi_{\overline{1\!0}}\!\circ\!\phi_0)(\kappa)
		 & =\overline{1\! 5}9 \overline{1\!0}\, \overline{1\!0} \textcolor{olive}{7 9 \overline{1\!0}} \textcolor{blue}{9 3} \textcolor{red}{8 6 2},
	\end{align*}
	and finally the third and last iteration of the \textbf{for} loop:
	\begin{align*}
		(\phi_{\overline{1\!2}}\!\circ\!\phi_{1}\!\circ\!\phi_{4}\!\circ\!\phi_{8}\!\circ\!\phi_{\overline{1\!1}}\!\circ\!\phi_{2}\!\circ\!\phi_{3}\!\circ\!\phi_{5}\!\circ\!\phi_{6}\!\circ\!\phi_{7}\!\circ\! \phi_{9}\!\circ\!\phi_{\overline{1\!0}}\!\circ\!\phi_0)(\kappa) & = 3 \overline{1\!0}\, \overline{1\!1}\, \overline{1\!1} \textcolor{olive}{8 \overline{1\!0}\, \overline{1\!1}}\, \textcolor{blue}{\overline{1\!0} 4} \textcolor{red}{9 7 3} = \kappa.
	\end{align*}
	Hence, the output of \Cref{alg:toppling} applied to $\kappa$ is the word $\overline{1\!0} 9 7 6 5 3 2\overline{1\!1} 8 41\overline{1\!2}$ (cf.\ \Cref{ex:parking_word} and \Cref{ex:verify_recurrence}).
\end{example}

\begin{algorithm}
	\caption{Toppling algorithm}
	\label{alg:toppling}
	\begin{algorithmic}
		\Require A graph $G$ with $n$ non-sink vertices and a recurrent configuration $\kappa$
		\Ensure The word of non-sink vertices in the order they have been toppled
		\State Topple the sink, i.e.\ compute $\phi_0(\kappa)$
		\State Initialize the output word as empty
		\While{there are non-sink vertices that are untoppled}
		\For{$i$ going from $n$ to $1$ (in decreasing order)}
		\If{vertex $i$ is unstable}
		\State Topple vertex $i$ \Comment{possibly creating new unstable vertices}
		\State Append $i$ to the output word
		\EndIf
		\EndFor
		\EndWhile
	\end{algorithmic}
\end{algorithm}

Observe that, by construction, the algorithm terminates: since $\kappa$ is recurrent,  $\phi_0(\kappa)$ is non-negative and at least one of the vertices adjacent to the sink is unstable; then every time we topple we stay non-negative, and since $\kappa$ is recurrent the process must go through all the non-sink vertices (otherwise we found a subset $A$ of non-sink vertices such that after we topple its vertices we are in a stable configuration, cf.\ \Cref{rem:recurrence_definition}).

By construction the algorithm outputs a toppling sequence that verifies the recurrence of $\kappa$.
We can now define our new statistic on recurrent configurations.
\begin{definition}
	Let $\kappa$ be a recurrent configuration of $G$. For every $i \in [n]$, let $r_i(\kappa)$ be the number of \textbf{for} loop iterations in \Cref{alg:toppling} that occurred before the one in which the vertex $i$ is toppled (so if $i$ is toppled in the first iteration, then $r_i(\kappa)=0$). Then we define the \emph{delay} of $\kappa$ as
	\[\del(\kappa) \coloneqq \sum_{i=1}^n r_i(\kappa).\]
\end{definition}

\begin{remark}
	\label{rmk:delay}
	If $\sigma$ is the output of \Cref{alg:toppling} applied to $\kappa$, then, since clearly descents in the permutation $\sigma_n\sigma_{n-1}\cdots \sigma_1$ correspond to \textbf{while} loop iterations in the computation of the delay, we have
	\[\del(\kappa) = \maj(\sigma_n\sigma_{n-1}\cdots \sigma_1).\]
\end{remark}

\begin{example}
	\label{ex:delay}
	For the configuration $\kappa$ of \Cref{ex:kappa_config}, we found in \Cref{ex:algo} that \Cref{alg:toppling} gives $\sigma = \overline{1\!0} 9 7 6 5 3 2 \overline{1\!1} 8 4 1 \overline{1\!2}$, so that $\del(\kappa) = \maj (\overline{1\!2} 1 4 8 \overline{1\!1} 2 3 5 6 7 9 \overline{1\!0}) = 1+5 = 6$. Actually, looking at the computation of the algorithm, we find that the word $r_1(\kappa)r_2(\kappa)\cdots$ in this case is $100100010012$, whose letters add up to $6$ (cf.\ \Cref{ex:parking_word}).
\end{example}

\section{Sorted recurrent configurations of \texorpdfstring{$\widehat{G}_{\mu,\nu}$}{Gmunu}} \label{sec:SortRec}

Let $\mu$ and $\nu$ be two compositions such that $\lvert \mu \rvert + \lvert \nu \rvert=n$, and consider the Young subgroup $\mathfrak{S}_\mu \times \mathfrak{S}_\nu$ of the symmetric group $\mathfrak{S}_n$ consisting of the permutations that preserve the components of $G_{\mu,\nu}$. We want to consider configurations modulo the natural action of $\mathfrak{S}_\mu \times \mathfrak{S}_\nu$ on the set of configurations. More precisely, a \emph{sorted configuration}\footnote{The relation with the general definition of \emph{sorted configuration} given in the introduction is simply that we are picking a specific convenient element in each orbit.} of the sandpile on $\widehat{G}_{\mu,\nu}$ is a configuration $\kappa$ that is weakly decreasing inside each clique component of $\widehat{G}_{\mu,\nu}$ and weakly increasing inside each independent component of $\widehat{G}_{\mu,\nu}$: if $i,j \in K_{\mu_r}$ and $i<j$, then $\kappa(i) \geq \kappa(j)$; if $i,j \in I_{\nu_s}$ and $i<j$, then $\kappa(i) \leq \kappa(j)$.

\begin{example}
	\label{ex:sorted_recurrent_config}
	The configuration \[ \kappa = 3 \overline{1\!0}\, \overline{1\!1}\, \overline{1\!1} \textcolor{olive}{8 \overline{1\!0}\, \overline{1\!1}}\, \textcolor{blue}{\overline{1\!0} 4} \textcolor{red}{9 7 3} \] is a sorted recurrent configuration of $\widehat{G}_{(4,\textcolor{olive}{3}),(\textcolor{red}{3},\textcolor{blue}{2})}$ (recall that in our notation $\kappa = \kappa(n)\kappa(n-1)\cdots \kappa(1)$).
\end{example}

Let $\kappa$ be a sorted recurrent configuration of $\widehat{G}_{\mu,\nu}$. Let $\sigma \in \mathfrak{S}_n$ be the toppling word produced by \Cref{alg:toppling} applied to $\kappa$. For every vertex $v$ in a clique component $K_{\mu_r}$ we set\footnote{The idea of going from $\kappa$ to $\widetilde{\kappa}$ is to ``embed'' the configurations of $\widehat{G}_{\mu,\nu}$ into the configurations of $\widehat{G}_{\mu \sqcup \nu,\varnothing}$ (which is defined below).
	It corresponds to add back the removed edges in the independent components, without modifying the level of the configurations, nor the output of Algorithm~\ref{alg:toppling} (cf.\ Lemma~\ref{lem:indep_reduction}).} \[ \widetilde{\kappa}(v) \coloneqq \kappa(v). \]

For every independent component $I_{\nu_s}$ of $G_{\mu,\nu}$, we order its vertices in decreasing order, and if $v^{(s)}_j$ is the $j\th$ vertex of $I_{\nu_s}$, we set \[ \widetilde{\kappa}(v^{(s)}_j) \coloneqq \kappa(v^{(s)}_j) + \nu_s - j.\]

\begin{remark}
	\label{rmk:kappatilde_increasing}
	Since $\kappa$ is sorted, it is weakly increasing on the vertices of $I_{\nu_s}$, i.e., using the notation above, $\kappa(v^{(s)}_j) \geq \kappa(v^{(s)}_{j+1})$ for every $j \in [\nu_s-1]$. Therefore $\widetilde{\kappa}$ is strictly increasing on the vertices of $I_{\nu_s}$, i.e.\ \[\widetilde{\kappa}(v^{(s)}_j) = \kappa(v^{(s)}_j) + \nu_s - j \geq \kappa(v^{(s)}_{j+1})+ \nu_s - j > \kappa(v^{(s)}_{j+1}) + \nu_s-(j+1) = \widetilde{\kappa}(v^{(s)}_{j+1}) \]
	for every $j \in [\nu_s-1]$.
\end{remark}

Given two compositions $\mu$ and $\nu$, let $\widehat{G}_{\mu \sqcup \nu, \varnothing}$ be the labelled graph obtained from $\widehat{G}_{\mu,\nu}$ by turning its independent components into clique components, i.e.\ by adding all the missing edges; but notice that for this graph we keep the same notation for the components as the one for $\widehat{G}_{\mu,\nu}$, and we call a stable configuration \emph{sorted} if it is weakly decreasing in the original clique components $K_{\mu_s}$ and weakly increasing in the new clique components $I_{\nu_r}$, i.e.\ the ones coming from independent components of $\widehat{G}_{\mu, \nu}$.

\begin{lemma}
	\label{lem:indep_reduction}
	Given $\mu$ and $\nu$ two compositions, if $\kappa$ is a sorted stable configuration for $\widehat{G}_{\mu,\nu}$, then $\widetilde{\kappa}$ is sorted stable for $\widehat{G}_{\mu \sqcup \nu, \varnothing}$. Moreover, if $\kappa$ is recurrent for $\widehat{G}_{\mu,\nu}$, then $\widetilde{\kappa}$ is recurrent for $\widehat{G}_{\mu \sqcup \nu, \varnothing}$, and in this case if $\sigma \in \mathfrak{S}_n$ is the toppling word produced by \Cref{alg:toppling} applied to $\kappa$ on $\widehat{G}_{\mu,\nu}$, then $\sigma$ is also the toppling word produced by \Cref{alg:toppling} applied to $\widetilde{\kappa}$ on $\widehat{G}_{\mu \sqcup \nu,\varnothing}$.
\end{lemma}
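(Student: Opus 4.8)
The plan is to treat the two assertions separately, handling the static \emph{sorted stable} claim first and then the dynamic claim about recurrence and the toppling word. For the first claim I would argue directly from the two definitions. The map $\kappa \mapsto \widetilde\kappa$ leaves clique vertices untouched, and such a vertex has degree $n$ in both $\widehat{G}_{\mu,\nu}$ and $\widehat{G}_{\mu\sqcup\nu,\varnothing}$, so both stability and the weakly-decreasing condition are inherited at once. For a vertex $v_j^{(s)}$ of an independent component $I_{\nu_s}$, its degree jumps from $n-\nu_s+1$ to $n$, while its value increases by $\nu_s - j$; combining $\kappa(v_j^{(s)}) \le n - \nu_s + 1$ with this shift yields $\widetilde\kappa(v_j^{(s)}) \le n + 1 - j \le n$, so $\widetilde\kappa$ is stable. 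The \emph{sorted} condition for $\widetilde\kappa$ is then exactly \Cref{rmk:kappatilde_increasing} (strict increase on each $I_{\nu_s}$) together with the untouched clique part.

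For the second claim the guiding principle is that every edge we add lies \emph{inside} an independent component, and that in both graphs every vertex outside $I_{\nu_s}$ (including the sink) is adjacent to every vertex of $I_{\nu_s}$; hence any toppling of an external vertex adds one grain \emph{uniformly} to all of $I_{\nu_s}$, in both graphs. I would run \Cref{alg:toppling} on $\kappa$ in $\widehat{G}_{\mu,\nu}$, recording the intermediate configurations $\kappa^{(t)}$ and the output word $\sigma$, and track the discrepancy $\delta^{(t)}(v) \coloneqq \widetilde\kappa^{(t)}(v) - \kappa^{(t)}(v)$ while forcing the complete-graph process to follow the same order. The goal is to prove by induction on $t$ that the two processes topple the same vertex at every step, maintaining the invariant that the already-toppled vertices of each $I_{\nu_s}$ form a prefix $\{v_1^{(s)},\dots,v_m^{(s)}\}$ and that each not-yet-toppled $v_j^{(s)}$ satisfies $\delta^{(t)}(v_j^{(s)}) = (\nu_s - j) + m$, while $\delta^{(t)} \equiv 0$ on clique vertices.

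The crux is the instability bookkeeping at a scan step. For clique vertices there is nothing to check, since $\delta \equiv 0$ and the degree is unchanged, so their instability and their common reference value evolve identically. For the \emph{next} untoppled vertex $v_{m+1}^{(s)}$ of a component, the invariant gives $\delta^{(t)}(v_{m+1}^{(s)}) = \nu_s - 1$, so its instability in the complete graph, $\widetilde\kappa^{(t)}(v_{m+1}^{(s)}) > n$, is equivalent to $\kappa^{(t)}(v_{m+1}^{(s)}) > n - \nu_s + 1$, which is precisely its instability in $\widehat{G}_{\mu,\nu}$; thus both processes make the same decision, and the reference value $\kappa^{(t)}(v_{m+1}^{(s)})$ is unaffected by the within-component topplings, since those are inert in the independent graph.

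It remains to rule out out-of-order firings. For a \emph{later} untoppled vertex $v_j^{(s)}$ with $j > m+1$ the complete-graph threshold $n-\nu_s+(j-m)$ is strictly harder, but the uniform-input observation shows the reference values $\kappa^{(t)}$ stay weakly decreasing along the untoppled suffix, so $v_j^{(s)}$ can be unstable only if $v_{m+1}^{(s)}$ is, and the top-to-bottom scan of \Cref{alg:toppling} reaches $v_{m+1}^{(s)}$ first; this \emph{gating} forces both graphs to topple the component in the prefix order $v_1^{(s)},v_2^{(s)},\dots$ and never out of turn. I expect this gating step---reconciling the within-component redistribution of the complete graph, which genuinely deposits new grains on the suffix, with the inert suffix of the independent graph---to be the main obstacle, and it is the uniformity of external input together with the prefix invariant that makes it go through. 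Once the orders agree, $\sigma$ is identical for the two runs; since $\kappa$ is recurrent the run exhausts all $n$ vertices and returns to $\widetilde\kappa$, which is stable by the first part, so the same $\sigma$ verifies the recurrence of $\widetilde\kappa$ and is the word output by \Cref{alg:toppling} on $\widetilde\kappa$ in $\widehat{G}_{\mu\sqcup\nu,\varnothing}$.
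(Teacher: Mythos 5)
Your argument is correct and essentially identical to the paper's: your discrepancy invariant $\delta^{(t)}(v_j^{(s)}) = (\nu_s - j) + m$ is precisely the paper's identity $\widetilde{u}(v_m) - u(v_m) - b = \nu_r - m$, and your ``gating'' step is the paper's observation that the toppled vertices of each independent component always form a prefix $v_1^{(s)},\dots,v_b^{(s)}$, forced by the sortedness of $\kappa$ and the decreasing scan order. The one point the paper checks explicitly that you pass over in silence is that an already-toppled vertex of $I_{\nu_s}$ --- which in the complete graph keeps receiving grains from its component-mates after it fires --- never becomes unstable again in either run; this follows from monotonicity after toppling together with the stability of $\widetilde{\kappa}$ established in your first paragraph, so it is a routine addition to your induction rather than a gap.
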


\begin{proof}
	The stability of $\widetilde{\kappa}$ for $\widehat{G}_{\mu \sqcup \nu, \varnothing}$ is clear, since, going from $\widehat{G}_{\mu,\nu}$ to $\widehat{G}_{\mu \sqcup \nu, \varnothing}$, all that happens is that for every $r$ the degree of the vertices in $I_{\nu_r}$ goes up by $\nu_r-1$. If $\kappa$ is sorted for $\widehat{G}_{\mu,\nu}$, then by definition $\widetilde{\kappa}$ is sorted for $\widehat{G}_{\mu \sqcup \nu, \varnothing}$.

	About $\sigma$: let $\kappa$ be sorted recurrent for $\widehat{G}_{\mu,\nu}$, and suppose that for the first $N$ visited\footnote{\label{ftn:visits}\Cref{alg:toppling} \emph{visits} all the non-sink vertices in decreasing order during each iteration of the \textbf{for} loop: these are the visits we are talking about.} vertices \Cref{alg:toppling} did exactly the same topplings while running on $\widehat{G}_{\mu,\nu}$ for $\kappa$ and while running on $\widehat{G}_{\mu \sqcup \nu, \varnothing}$ for $\widetilde{\kappa}$. So we reached the configurations $u$ and $\widetilde{u}$ respectively, and now it comes the $(N+1)^{\text{st}}$ vertex to visit, say $v$, which belongs to a given component.
	If we topple a vertex in a different component, the effect on the values on the vertices of our component is the same in $\widehat{G}_{\mu,\nu}$ and in $\widehat{G}_{\mu\sqcup \nu,\varnothing}$. If our given component is a clique component, then toppling one of its vertices is going to have the same effect in $\widehat{G}_{\mu,\nu}$ and in $\widehat{G}_{\mu \sqcup \nu,\varnothing}$ on the values at the vertices of our component.

	So if the component of $v$ is a clique component, then it is clear that $u(v)= \widetilde{u}(v)$ and \Cref{alg:toppling} is going to topple $v$ while running on $\widehat{G}_{\mu,\nu}$ for $\kappa$ if and only if it topples $v$ while running on $\widehat{G}_{\mu\sqcup \nu,\varnothing}$ for $\widetilde{\kappa}$.
	Suppose now that $v$ belongs to an independent component $I_{\nu_r}$ of $\widehat{G}_{\mu,\nu}$. Let $v_1,v_2,\dots,v_{\nu_r}$ be the vertices of $I_{\nu_r}$ in decreasing order, so that for every $i\in [\nu_r]$ we have
	\[\deg_{\widehat{G}_{\mu,\nu}}(v_i)-\kappa(v_i)= \deg_{\widehat{G}_{\mu \sqcup \nu, \varnothing}}(v_i)-\widetilde{\kappa}(v_i)-i+1.\]
	Let $m \in [\nu_r]$ be such that $v = v_m$, and let $b$ be the number of $j \in [\nu_r]$ such that $v_j$ has been already toppled in the first $N$ visits\footref{ftn:visits} of \Cref{alg:toppling}. We have two cases.

	\underline{Case 1}: $v_m$ has been toppled in the first $N$ visits of the algorithm. In this case
	\begin{align*}
		\tilde{u}(v_m) & =\tilde\kappa(v_m) + (N-1) -\deg_{\widehat{G}_{\mu\sqcup\nu,\varnothing}}(v_m) = \tilde\kappa(v_m) + N -1 - n \\
		u(v_m)         & =\kappa(v_m) + (N-b) - \deg_{\widehat G_{\mu,\nu}}(v_m) = \kappa(v_m) + N - b - (n-\nu_r +1)
	\end{align*}
	and so 
	\[\tilde u(v_m)-u(v_m)-b+\nu_r = \tilde{\kappa}(v_m) - \kappa(v_m) = \nu_r-m,\]
	hence
	\begin{align*}
		\deg_{\widehat{G}_{\mu,\nu}}(v_m)-u(v_m) & =\deg_{\widehat{G}_{\mu,\nu}}(v_m)-\widetilde{u}(v_m)+b-m                             \\
		                                         & =\deg_{\widehat{G}_{\mu \sqcup \nu, \varnothing}}(v_m)-\widetilde{u}(v_m)+b-m-\nu_r+1 \\
		  & \leq \deg_{\widehat{G}_{\mu \sqcup \nu, \varnothing}}(v_m)-\widetilde{u}(v_m),
	\end{align*}
where the last inequality fomes from the fact that $b\leq \nu_r$ and $1\leq m$.
	Also, clearly in this case $\deg_{\widehat{G}_{\mu,\nu}} (v_m) - u(v_m) > 0$, so that $\deg_{\widehat{G}_{\mu \sqcup \nu, \varnothing}}(v_m)-\widetilde{u}(v_m)> 0$, and in both situations $v_m$ does not get toppled in the $(N+1)^{\text{st}}$ visit of the algorithm.

	\underline{Case 2}: $v_m$ has not been toppled in the first $N$ visits of the algorithm. In this case
	\[\widetilde{u}(v_m)-u(v_m)-b=\widetilde{\kappa}(v_m)-\kappa(v_m) = \nu_r-m,\]
	hence
	\[ \deg_{\widehat{G}_{\mu,\nu}}(v_m)-u(v_m) = \deg_{\widehat{G}_{\mu \sqcup \nu, \varnothing}}(v_m)-\widetilde{u}(v_m)+b-m+1. \]
	We observe that if $t>1$ is such that $v_t$ has been toppled during the first $N$ visits of the algorithm running on $\widehat{G}_{\mu,\nu}$, then $v_{t-1}$ has also been toppled. Indeed, let $\eta$ be the configuration right before the toppling of $v_t$, and suppose to the contrary that $v_{t-1}$ has not yet been toppled either. This means that $v_{t-1}$ and $v_{t}$ have received the same number of grains and neither has donated any; hence
	\[\eta(v_{t-1})-\eta(v_{t}) = \kappa(v_{t-1})-\kappa(v_{t}) \geq 0, \]
	and so \[\deg_{\widehat{G}_{\mu,\nu}}(v_{t-1})-\eta(v_{t-1})=\deg_{\widehat{G}_{\mu,\nu}}(v_{t})-\eta(v_{t-1})\leq \deg_{\widehat{G}_{\mu,\nu}}(v_{t})-\eta(v_{t})\leq 0,\]
	so $v_{t-1}$ is unstable and should have been toppled during the previous visit.

	This observation implies that the set of $j\in [\nu_r]$ such that $v_j$ has been toppled during the first $N$ visits of the algorithm is precisely $[b]$. Now
	\[\deg_{\widehat{G}_{\mu,\nu}}(v_m)-u(v_m)=\deg_{\widehat{G}_{\mu \sqcup \nu, \varnothing}}(v_m)-\widetilde{u}(v_m)+b-m+1\leq \deg_{\widehat{G}_{\mu \sqcup \nu, \varnothing}}(v_m)-\widetilde{u}(v_m)\]
	implies that if $v_m$ is toppled at the $(N+1)^{\text{st}}$ visit of the algorithm running on $\widehat{G}_{\mu\sqcup \nu,\varnothing}$, then it will certainly be also toppled at the $(N+1)^{\text{st}}$ visit of the algorithm running on $\widehat{G}_{\mu, \nu}$. On the other hand, if $v_m$ is toppled at the $(N+1)^{\text{st}}$ visit of the algorithm running on $\widehat{G}_{\mu, \nu}$, then the above observation implies that necessarily $b=m-1$, hence
	\[\deg_{\widehat{G}_{\mu,\nu}}(v_m)-u(v_m) =\deg_{\widehat{G}_{\mu\sqcup \nu,\varnothing}}(v_m)-\widetilde{u}(v_m)+b-m+1 = \deg_{\widehat{G}_{\mu\sqcup \nu,\varnothing}}(v_m)-\widetilde{u}(v_m),\]
	so that  $v_m$ is toppled also at the $(N+1)^{\text{st}}$ visit of the algorithm running on $\widehat{G}_{\mu\sqcup \nu,\varnothing}$.

	This shows that \Cref{alg:toppling} running for $\kappa$ on $\widehat{G}_{\mu, \nu}$ produces exactly the same output when running for $\widetilde{\kappa}$ on $\widehat{G}_{\mu \sqcup \nu, \varnothing}$, proving that $\widetilde{\kappa}$ is also recurrent, and that its toppling word is exactly the same as the one of $\kappa$, as claimed.
\end{proof}

Given two compositions $\mu$ and $\nu$ with $ \lvert \mu \rvert + \lvert \nu \rvert = n$, a sorted stable configuration $\kappa$ of $\widehat{G}_{\mu,\nu}$, and a permutation $\sigma\in \mathfrak{S}_n$, for every $i\in [n]$ we set\footnote{There is no relation with the $u$ in the proof of \Cref{lem:indep_reduction}.}
\[u_{\sigma^{-1}(i)} \coloneqq \sigma^{-1}(i)+\widetilde{\kappa}(i)-n.\]
An interpretation of $u_{\sigma^{-1}(i)}$ in terms of the sandpile model will be provided in the course of the proof of Proposition~\ref{prop:recurrent_inequalities}. Let us first look at an example.

\begin{example}
	\label{ex:u}
	For the configuration $\kappa = 3 \overline{1\!0}\, \overline{1\!1}\, \overline{1\! 1} \textcolor{olive}{8 \overline{1\!0}\, \overline{1\!1}}\, \textcolor{blue}{\overline{1\!0} 4} \textcolor{red}{9 7 3}$ in \Cref{ex:sorted_recurrent_config}, we found in \Cref{ex:algo} that \Cref{alg:toppling} gives $\sigma=\overline{1\!0} 9 7 6 5 3 2\overline{1\!1} 8 4 1 \overline{1\!2}$. Hence, $\widetilde{\kappa}=3\overline{1\!0}\, \overline{1\!1}\, \overline{1\!1} \textcolor{olive}{8 \overline{1\!1}\, \overline{1\!1}}\, \textcolor{blue}{\overline{1\!1} 4} \textcolor{red}{\overline{1\!1} 8 3}$, and the word $u \coloneqq u_1 u_2 \cdots$ is $011345365223$.
\end{example}

Given a permutation $\sigma = \sigma(1)\sigma(2)\cdots \sigma(n)$, we add $\sigma(0) \coloneqq 0$ in front of it, and we define its \emph{runs} as its maximal consecutive decreasing substrings. Now for every $i \in [n]$, we define $w_{\sigma^{-1}(i)} = w_{\sigma^{-1}(i)}(\sigma)$ as
\begin{align*}
	w_{\sigma^{-1}(i)}(\sigma) & \coloneqq \#\{\text{numbers in the same run of $i$ and larger than it}\}                                   \\
	                           & +\#\{\text{smaller numbers in the run immediately to the left of the one containing $i$}\}.
\end{align*}
\begin{example}\label{ex:w}
	The runs of $\sigma = \overline{1\!0} 9 7 6 5 3 2 \overline{1\!1} 8 4 1 \overline{1\!2}$ are separated by bars: $0 | \overline{1\!0} 9 7 6 5 3 2 | \overline{1\!1} 8 4 1 | \overline{1\!2}$, so that the word $w = w_1(\sigma) w_2(\sigma) \cdots $ is $123456776434$.
\end{example}

The following propositions characterize the sorted recurrent configurations of $\widehat{G}_{\mu,\nu}$.
\begin{proposition}
	\label{prop:recurrent_inequalities}
	Let $\kappa$ be a sorted recurrent configuration of $\widehat{G}_{\mu,\nu}$. Let $\sigma \in \mathfrak{S}_n$ be the toppling word produced by \Cref{alg:toppling} applied to $\kappa$. Then for every $i \in [n]$ \[0 \leq u_{\sigma^{-1}(i)} < w_{\sigma^{-1}(i)}.\]
\end{proposition}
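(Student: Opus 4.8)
The plan is to transport the whole computation to the complete graph, where $u$ and $w$ become transparent sandpile quantities. By \Cref{lem:indep_reduction}, $\widetilde\kappa$ is a recurrent configuration of $\widehat G_{\mu\sqcup\nu,\varnothing}$, which is the complete graph on $\{0\}\cup[n]$ (so every non-sink vertex has degree $n$), and \Cref{alg:toppling} applied to $\widetilde\kappa$ returns the very same word $\sigma$. I work entirely in this graph, where every non-sink vertex is adjacent to all others and to the sink. The key observation is that $u_{\sigma^{-1}(i)}$ is the number of grains sitting on vertex $i$ immediately after $i$ topples: writing $j=\sigma^{-1}(i)$, between the firing of the sink and the toppling of $i$ the vertex $i$ receives exactly one grain from the sink and one grain from each of the $j-1$ vertices toppled before it, so its value just before toppling is $V_i\coloneqq\widetilde\kappa(i)+j$, and, toppling removing $\deg(i)=n$ grains, its value right after is $\widetilde\kappa(i)+j-n=u_{\sigma^{-1}(i)}$.

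Both inequalities then follow from the single two-sided bound $n\le V_i\le n-1+w_{\sigma^{-1}(i)}$, combined with $V_i=u_{\sigma^{-1}(i)}+n$. The lower bound $n\le V_i$ is immediate, since $i$ is toppled only while it is unstable, i.e.\ while it carries at least $\deg(i)=n$ grains; this already gives $u_{\sigma^{-1}(i)}\ge 0$ (equivalently, the sequence produced by \Cref{alg:toppling} verifies the recurrence, so the post-toppling value is non-negative).

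For the upper bound, the crucial point is to read $w_{\sigma^{-1}(i)}$ as a grain count. The runs of $\sigma$ (with $0$ prepended) are exactly the successive passes of the \textbf{for} loop: the $m$-th pass topples precisely the vertices of the $m$-th run in decreasing order, and $\{0\}$ forms the $0$-th run. Suppose $i$ lies in run $k$. Then the $a_i$ vertices of run $k$ larger than $i$ are exactly those toppled before $i$ inside pass $k$, and the $b_i$ vertices of run $k-1$ smaller than $i$ are exactly those toppled after $i$ was last visited and found stable in pass $k-1$; for $k=1$ the left run is $\{0\}$, and $b_i=1$ records the single firing of the sink, before which $i$ still carried its stable initial value $\widetilde\kappa(i)$. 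Because each pass visits the vertices in strictly decreasing order, no other vertex fires in the window between this last stable confirmation of $i$ and its toppling, and the sink fires only once, at the start; hence $i$ gains exactly $a_i+b_i=w_{\sigma^{-1}(i)}$ grains in that window. As the value of $i$ at the last stable confirmation is at most $\deg(i)-1=n-1$ (it was found stable there, and for $k=1$ this is the stability of $\widetilde\kappa$ from \Cref{lem:indep_reduction}), we get $V_i\le(n-1)+w_{\sigma^{-1}(i)}$, that is $u_{\sigma^{-1}(i)}=V_i-n<w_{\sigma^{-1}(i)}$.

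The step that demands real care is the exact grain count $w_{\sigma^{-1}(i)}=a_i+b_i$, which is where the strictly decreasing visiting order of each \textbf{for} pass is used: it forces the vertices of run $k-1$ that fire after $i$'s visit to be exactly those below $i$, and the vertices of run $k$ that fire before $i$ to be exactly those above $i$. The only genuine case split is the boundary $k=1$, where the previous pass is replaced by the initial configuration and the smaller elements of the left run are replaced by the sink; in both cases the bound $n\le V_i\le n-1+w_{\sigma^{-1}(i)}$ holds verbatim.
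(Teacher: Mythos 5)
Your proof is correct and follows essentially the same route as the paper's: reduce to the complete graph via \Cref{lem:indep_reduction}, read $u_{\sigma^{-1}(i)}$ as the number of grains left on $i$ just after it topples (yielding the lower bound from instability at toppling time), and read $w_{\sigma^{-1}(i)}$ as the number of topplings occurring between the last stable visit of $i$ and its actual toppling (yielding the upper bound from stability at that last visit). Your write-up merely makes the grain count $a_i+b_i$ and the $k=1$ boundary case explicit, and states the upper bound directly rather than by contradiction.
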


\begin{proof}
	By \Cref{lem:indep_reduction}, $\sigma$ is also the toppling word computed by the algorithm for $\widetilde{\kappa}$ on $\widehat{G}_{\mu \sqcup \nu, \varnothing}$. So let us focus on the toppling process of $\tilde \kappa$ in this complete graph.

	The first inequality comes from the fact that $\sigma^{-1}(i) + \tilde\kappa(i)$ is the value of the configuration on $i$, at the moment that we must topple $i$. Indeed,
	\begin{align*}
		\sigma^{-1}(i) & = \text{position of $i$ in the toppling algorithm}               \\
		               & = \#\{\text{vertices (including the sink) toppled before $i$}\}.
	\end{align*}
	Since $\deg_{\widehat{G}_{\mu\sqcup \nu,\varnothing}}(i)=n$, and we are toppling $i$, we must have $\sigma^{-1}(i)+\tilde{\kappa}(i) \geq n$ and so $u_{\sigma^{-1}(i)}\geq 0$. It also follows that $u_{\sigma^{-1}(i)}$ is the number of grains remaining at $i$ right after it topples.

	Now for the second inequality. Notice that the decreasing runs of $\sigma$ are exactly the vertices that are toppled in each iteration of the \textbf{for} loop of \Cref{alg:toppling}. It follows that $w_{\sigma^{-1}(i)}$ is the number of vertices toppled between the time we last checked $i$ and the time we actually topple $i$ in the algorithm.
	This number cannot be less than or equal to $u_{\sigma^{-1}(i)}$, or else $i$ would have been able to topple the last time it was checked. Indeed, $u_{\sigma^{-1}(i)}$ is the number of grains on $i$ \emph{after} $i$ is toppled; if $w_{\sigma^{-1}(i)} \leq u_{\sigma^{-1}(i)}$, then $i$ would already have been unstable when it was last visited.
\end{proof}

We have a converse of the previous proposition.

\begin{proposition}
	Let $\kappa$ be a sorted stable configuration of $\widehat{G}_{\mu,\nu}$, and let $\sigma \in \mathfrak{S}_n$ be such that for every $i\in [n]$
	\[0 \leq u_{\sigma^{-1}(i)} < w_{\sigma^{-1}(i)}.\]
	Then $\kappa$ is recurrent and $\sigma$ is the toppling word given by \Cref{alg:toppling} applied to $\kappa$.
\end{proposition}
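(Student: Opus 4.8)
The plan is to run \Cref{alg:toppling} on $\kappa$ and verify that the hypothesised inequalities force it to topple the vertices in exactly the order dictated by $\sigma$, thereby simultaneously establishing recurrence and identifying $\sigma$ as the toppling word. This is the reverse implication of \Cref{prop:recurrent_inequalities}, so I expect to reverse the two interpretations established there: that $u_{\sigma^{-1}(i)}$ is the number of grains left on $i$ immediately after it topples, and that $w_{\sigma^{-1}(i)}$ is the number of vertices toppled between the for-loop visit at which $i$ was last inspected and the visit at which it is toppled. First I would reduce to the complete graph: the step-by-step coincidence of the two runs of \Cref{alg:toppling} proved in \Cref{lem:indep_reduction} uses only that $\kappa$ is sorted stable (recurrence being invoked there solely to guarantee that the process exhausts all vertices), so the run of the algorithm on $\kappa$ in $\widehat{G}_{\mu,\nu}$ coincides at every step with its run on $\widetilde{\kappa}$ in $\widehat{G}_{\mu\sqcup\nu,\varnothing}$. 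It therefore suffices to prove the statement for $\widetilde{\kappa}$ in the complete graph, where every non-sink vertex has degree $n$ and the value deposited on a vertex is simply its initial value plus the number of vertices (including the sink) that have toppled so far.

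The core is an induction showing that the $(k+1)$-st toppling performed by the algorithm is $\sigma(k+1)$, for $k=0,\dots,n-1$, with the for-loop passes matching the decreasing runs of $\sigma$ (recall from \Cref{rmk:delay} that runs correspond to passes). The invariant I would carry is that, just after toppling the sink and $\sigma(1),\dots,\sigma(k)$ in this order, an as-yet-unfired vertex $j$ carries $\widetilde{\kappa}(j)+(k+1)$ grains, and — crucially — that an unfired vertex only ever gains grains, so its value is monotone non-decreasing in time until it fires. When the for-loop of the current pass visits an unfired vertex $i$, there are two cases. If $i$ is the next vertex $\sigma(k+1)$ due to fire, then by the invariant it carries $u_{\sigma^{-1}(i)}+n$ grains at this moment, and $u_{\sigma^{-1}(i)}\ge 0$ guarantees that $i$ is unstable and fires, leaving $u_{\sigma^{-1}(i)}\ge 0$ grains. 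If $i$ belongs to a later run, I would show it is still stable: the inequality $u_{\sigma^{-1}(i)}<w_{\sigma^{-1}(i)}$ says precisely that at the for-loop visit immediately preceding its eventual firing it held fewer than $n$ grains, and by the monotonicity observation it then held fewer than $n$ grains at every earlier visit as well, including the present one, so it is not fired prematurely. The combinatorial definition of $w_{\sigma^{-1}(i)}$ — counting the larger elements preceding $i$ in its run together with the smaller elements of the run immediately to its left — is exactly the number of topplings separating these two visits, which is what makes the translation between the inequality and the grain count exact.

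Finally I would check that no vertex fires twice: after $i$ fires it again only gains grains, and its value at the very end equals $\widetilde{\kappa}(i)$, which is below $n$ since $\widetilde{\kappa}$ is stable (\Cref{lem:indep_reduction}), so it never becomes unstable again. Thus the algorithm topples each of the $n$ vertices exactly once, in the order $\sigma$; the while loop terminates, and every intermediate configuration is non-negative (unfired vertices keep their non-negative values, and a fired vertex $i$ retains $u_{\sigma^{-1}(i)}\ge 0$ grains). Since toppling the sink and all non-sink vertices once returns the configuration to $\kappa$, this sequence verifies the recurrence of $\kappa$ and exhibits $\sigma$ as its toppling word; transferring back through \Cref{lem:indep_reduction} yields the conclusion for $\kappa$ on $\widehat{G}_{\mu,\nu}$. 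I expect the main obstacle to be the bookkeeping of the second paragraph: making rigorous the identification of the purely combinatorial quantity $w_{\sigma^{-1}(i)}$ with the number of grains actually accumulated during the simulation at the boundaries between consecutive runs, and verifying that \emph{stable at the last inspection} really does propagate backwards to all earlier inspections. This is the precise reverse of the computation carried out in \Cref{prop:recurrent_inequalities}, and it is where the run/pass correspondence must be pinned down most carefully.
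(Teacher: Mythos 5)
Your proof is correct and follows essentially the same route as the paper's: run \Cref{alg:toppling}, use $u_{\sigma^{-1}(i)}\geq 0$ to guarantee non-negativity (hence recurrence) and $u_{\sigma^{-1}(i)}<w_{\sigma^{-1}(i)}$ together with monotonicity of the values on unfired vertices to rule out premature topplings — precisely the two interpretations the paper imports from the proof of \Cref{prop:recurrent_inequalities}. You are in fact more careful than the paper on one point: you observe that the reduction to $\widehat{G}_{\mu\sqcup\nu,\varnothing}$ cannot invoke \Cref{lem:indep_reduction} verbatim (its hypothesis is recurrence, which is what is being proved) but that its step-by-step coincidence argument only needs sorted stability, a detail the paper's two-paragraph proof leaves implicit.
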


\begin{proof}
	To show that $\kappa$ is recurrent it is enough to show that, after we topple the sink, if we topple the non-sink vertices in the order given by $\sigma$, then at any time we have a non-negative configuration. Using the interpretation of $u_{\sigma^{-1}(i)}$ explained in the proof of \Cref{prop:recurrent_inequalities}, this is precisely what the inequality $0\leq u_{\sigma^{-1}(i)}$ guarantees.

	Again referring to the proof of \Cref{prop:recurrent_inequalities} for the interpretation of the quantity $w_{\sigma^{-1}(i)}$, the inequality $u_{\sigma^{-1}(i)}< w_{\sigma^{-1}(i)}$ guarantees that, after toppling the sink,  following the toppling sequence $\sigma$ the vertex $i$ does not become unstable in a \textbf{for} loop iteration of the algorithm before when it is supposed to topple. This shows that $\sigma$ is indeed the output of \Cref{alg:toppling}, completing the proof of the proposition.
\end{proof}

The above characterization can be checked on an instance by comparing Examples~\ref{ex:u}~and~\ref{ex:w}.

\section{Bijection with parking functions}\label{sec:bijection}

We now provide a bijection between sorted recurrent configurations of $\widehat{G}_{\mu,\nu}$ and the parking functions in $\PF(\mu;\nu)$.

Let $\mathsf{SortRec}(\mu;\nu)$ be the set of sorted recurrent configurations of $\widehat{G}_{\mu,\nu}$.
We define the function \[ \Phi \colon \mathsf{SortRec}(\mu;\nu) \to \mathsf{PF}(\mu;\nu) \] in the following way: given $\kappa \in \mathsf{SortRec}(\mu;\nu)$, in the notation of \Cref{sec:SortRec}, we set $\Phi(\kappa)$ to be the (unique) parking function of size $n = \lvert \mu \rvert + \lvert \nu \rvert$ such that the label $i$ occurs in column $n-\widetilde{\kappa}(i)$ (we number the columns increasingly from left to right) for every $i \in [n]$.

Before showing that $\Phi$ is a well-defined bijection, let us look at an example.
\begin{example} \label{ex:bijection}
	The parking function $D \in \PF((4,3);(3,2))$ in \Cref{fig:LDP_config} is the image $\Phi(\kappa)$ of the configuration $\kappa$ in \Cref{ex:kappa_config} ($\widetilde{\kappa}$ is computed in \Cref{ex:u}). The colored crosses in the diagram in \Cref{fig:LDP_config} denote the extra shift coming from considering $n-\widetilde{\kappa}(i)$ instead of $n-\kappa(i)$ for $i$ in an independent component.
\end{example}

We want to show (1) that the map $\Phi$ is well-defined, i.e.\ that our definition of $\Phi(\kappa)$ actually gives a parking function in $\mathsf{PF}(\mu;\nu)$, and (2) that $\Phi$ is bijective.

For (2): let $\kappa\in \mathsf{SortRec}(\mu;\nu)$, and let $\sigma \in \mathfrak{S}_n$ be the toppling word given by \Cref{alg:toppling} applied to $\kappa$. Recall the definition of the vector $u = u(\sigma) \coloneqq (u_1,u_2,\dots,u_n)$, i.e.\ for every $i \in [n]$
\begin{equation}
	\label{eq:usigma}
	\sigma^{-1}(i)-u_{\sigma^{-1}(i)} = n-\widetilde{\kappa}(i).
\end{equation}

Hence, we can see $\Phi$ as the map sending the pair $(\sigma,u)$, which by \Cref{prop:recurrent_inequalities} satisfies $0 \leq u_i < w_i(\sigma)$ for every $i \in [n]$, into the parking function whose label $i$ occurs in column $\sigma^{-1}(i) - u_{\sigma^{-1}(i)}$ for every $i \in [n]$.

Now we can apply \cite[Theorem~44]{LoehrRemmel_pmaj}, which we restate in our notation.\footnote{To match the notation in \cite{LoehrRemmel_pmaj}, one should keep in mind that the ingredients of the occurring formulas are computed on the reverse word $\sigma(n)\sigma(n-1)\cdots \sigma(1)$, so that for example the positions $\sigma^{-1}(i)$ get complemented to $n+1$; also, the map $\Phi$ is called ``$G$'' in \cite{LoehrRemmel_pmaj}.}
\begin{theorem}[Loehr-Remmel bijection]
	\label{thm:Loehr_Remmel}
	Via the map $\Phi$ the pairs $(\sigma,u)$ with $0\leq u_j<w_j(\sigma)$ for every $j \in [n]$ are sent bijectively onto the elements of $\mathsf{PF}(n)$, in such a way that $\binom{n+1}{2}-\sum_{j=1}^n (j-u_j)$ goes into area and $\sigma$ is the pmaj word of the image, which implies that $\mathsf{maj}(\sigma_{n}\sigma_{n-1} \cdots \sigma_1)$ goes into pmaj.
\end{theorem}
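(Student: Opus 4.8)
The statement is a translation of \cite[Theorem~44]{LoehrRemmel_pmaj} into the present notation, so the plan is twofold: first confirm that the restatement is faithful (matching their parking word, their offset vector, and the complementation of positions to $n+1$ explained in the footnote), and then recall the combinatorial core, which I would organize around the explicit column placement defining $\Phi$. Writing $j = \sigma^{-1}(i)$, the map puts the label $\sigma(j)$ into column $j - u_j$; thus the only data entering $\Phi$ is, for each position $j$, the column $c_j \coloneqq j - u_j$, together with the fact that labels sharing a column can always be made strictly increasing since the labels $1,\dots,n$ are distinct.

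Granting that $\Phi(\sigma,u)$ is a parking function whose parking word is $\sigma$, the two statistic identities are immediate, and I would dispatch them first. For the area, I would use that $\area(\Phi(\sigma,u))$ equals $\sum_{i=1}^n i$ minus the sum, over all rows $i$, of the column containing the label in row $i$; since that last sum ranges over all $n$ labels it equals $\sum_{j=1}^n (j - u_j)$ by the definition of $\Phi$, giving $\area(\Phi(\sigma,u)) = \binom{n+1}{2} - \sum_{j=1}^n (j - u_j)$. For pmaj, once $\sigma = p(D)$ is the parking word of $D = \Phi(\sigma,u)$, the definition of $\pmaj$ unwinds directly to $\pmaj(D) = \maj(p_n(D)\cdots p_1(D)) = \maj(\sigma_n \cdots \sigma_1)$.

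The real content, and the step I expect to be the main obstacle, is to show that the constraints $0 \le u_j < w_j(\sigma)$ are exactly equivalent to the assertion that $\Phi(\sigma,u)$ is a well-defined parking function whose parking word is $\sigma$. Here I would exploit the precise meaning of the greedy parking procedure: reading columns left to right, each reset step (a position where $p_i > p_{i-1}$) begins a new maximal decreasing run of $p(D) = \sigma$, so that, after prepending $\sigma(0) = 0$, the runs of $\sigma$ are in bijection with the blocks of the parking process. The quantity $w_j(\sigma)$ --- counting the larger entries in the run of $\sigma(j)$ together with the smaller entries in the run immediately to its left --- should then be precisely the number of columns into which $\sigma(j)$ can be placed while still being parked, in that exact order, by the greedy rule. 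Thus the inequalities $0 \le u_j < w_j(\sigma)$ parametrise the freedom available at each step, and the lower bound $u_j \ge 0$, i.e.\ $c_j \le j$, is exactly what forces the Dyck condition $\#\{j : c_j \le k\} \ge k$ for all $k$.

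For invertibility I would run this backwards: given $D \in \PF(n)$, compute its parking word $\sigma \coloneqq p(D)$, set $u_j \coloneqq j - (\text{column of } \sigma(j) \text{ in } D)$, and verify that these offsets land in the admissible range $[0, w_j(\sigma))$ and reconstruct $D$. The delicate point throughout remains the claim that $w_j(\sigma)$ counts exactly the parking choices for $\sigma(j)$, matching a run-theoretic count against a dynamic count along the greedy procedure; this is the combinatorial heart of the Loehr--Remmel argument, and it is the one ingredient I would ultimately defer to \cite{LoehrRemmel_pmaj}.
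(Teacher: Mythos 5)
The paper gives no proof of this statement at all: it simply imports \cite[Theorem~44]{LoehrRemmel_pmaj} and restates it in its own notation, so your decision to defer the key combinatorial lemma (that $w_j(\sigma)$ counts exactly the admissible columns for $\sigma(j)$ under the greedy parking rule) to the same reference is precisely the paper's approach. The parts you do work out — the identification of the runs of $\sigma$ with the reset steps of the parking procedure, the computation $\area(\Phi(\sigma,u))=\binom{n+1}{2}-\sum_j(j-u_j)$ via row-by-row areas $i-c_i$, and the observation that $\pmaj$ unwinds to $\maj(\sigma_n\cdots\sigma_1)$ once $\sigma=p(D)$ — are correct and go beyond what the paper records.
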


For (1), it remains to observe that the parking function that we obtained is actually in $\mathsf{PF}(\mu;\nu)$: indeed notice that if $i,j \in I_{\nu_r}$ with $i<j$, then \Cref{rmk:kappatilde_increasing} implies that $\widetilde{\kappa}(i) > \widetilde{\kappa}(j)$ so that $n-\widetilde{\kappa}(i) < n-\widetilde{\kappa}(j)$, hence $i$ occurs in a column strictly to the left of the column of $j$. Therefore, $i$ occurs necessarily in a row lower than the row of $j$. Similarly, if $i,j \in K_{\mu_s}$ with $i<j$, then $n - \widetilde{\kappa}(i) \geq n - \widetilde{\kappa}(j)$, so that $i$ occurs necessarily in a row higher than the row of $j$ (inside the same column of a parking function the labels occur in increasing order from bottom to top). All this shows that the pmaj reading word of $\Phi(\kappa)$ is in $\mathsf{W}(\mu;\nu)$, as we wanted.

The preceding discussion proves the following proposition.
\begin{proposition}
	The map $\Phi$ is a well-defined bijection.
\end{proposition}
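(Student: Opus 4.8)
The plan is to assemble the proposition from the pieces already developed, treating it as a bookkeeping exercise that organizes the two preceding propositions together with the Loehr--Remmel bijection (\Cref{thm:Loehr_Remmel}). The essential observation is that $\Phi$ has been recast as a map on pairs $(\sigma, u)$ via the identity $\sigma^{-1}(i) - u_{\sigma^{-1}(i)} = n - \widetilde{\kappa}(i)$, so the statement splits cleanly into verifying that $\Phi$ lands in the correct target set and that it is a bijection onto it.

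First I would establish the bijectivity onto all of $\mathsf{PF}(n)$. \Cref{prop:recurrent_inequalities} together with its converse shows that the sorted recurrent configurations $\kappa \in \mathsf{SortRec}(\mu;\nu)$ correspond precisely to the pairs $(\sigma, u)$ satisfying $0 \leq u_j < w_j(\sigma)$ for every $j \in [n]$, where $\sigma$ is the toppling word of $\kappa$ and $u$ is determined by $\widetilde{\kappa}$ through the defining relation. \Cref{thm:Loehr_Remmel} then says exactly that the map on such pairs induced by $(\sigma, u) \mapsto (\text{label } i \text{ in column } \sigma^{-1}(i) - u_{\sigma^{-1}(i)})$ is a bijection onto $\mathsf{PF}(n)$. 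Since $\widetilde{\kappa}$ is recovered from $(\sigma, u)$ and, conversely, $\kappa$ is recovered from $\widetilde{\kappa}$ by undoing the shift $\widetilde{\kappa}(v^{(s)}_j) = \kappa(v^{(s)}_j) + \nu_s - j$ on the independent components, the passage $\kappa \leftrightarrow (\sigma, u)$ is itself a bijection; composing gives that $\Phi$ is a bijection from $\mathsf{SortRec}(\mu;\nu)$ onto its image.

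Second, for well-definedness I would check that this image is exactly $\mathsf{PF}(\mu;\nu)$, which is the content of part (1) in the discussion. The key input is \Cref{rmk:kappatilde_increasing}: because $\widetilde{\kappa}$ is \emph{strictly} increasing on each independent component $I_{\nu_r}$, vertices $i < j$ in $I_{\nu_r}$ satisfy $n - \widetilde{\kappa}(i) < n - \widetilde{\kappa}(j)$, forcing the labels of a single independent component to appear in strictly increasing columns (hence strictly increasing rows, by the strict-increase-in-columns convention of parking functions). Dually, sortedness gives $\widetilde{\kappa}(i) \geq \widetilde{\kappa}(j)$, hence weakly decreasing columns, within each clique component $K_{\mu_s}$, placing those labels in weakly increasing rows arranged as an increasing run. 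Translating these row-order constraints into the pmaj reading word shows precisely that the labels of each $K_{\mu_s}$ read as an increasing subword $\uparrow\!\! K_{\mu_s}$ and the labels of each $I_{\nu_r}$ read as a decreasing subword $\downarrow\!\! I_{\nu_r}$, i.e.\ the reading word lies in the shuffle $\mathsf{W}(\mu;\nu)$, which is the definition of membership in $\mathsf{PF}(\mu;\nu)$.

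The only genuine subtlety, and the step I expect to require the most care, is matching the column/row conventions to the definition of the reading word so that the strict-versus-weak monotonicity of $\widetilde{\kappa}$ on each component type corresponds correctly to increasing versus decreasing subwords in $\mathsf{W}(\mu;\nu)$; in particular one must argue that distinct labels in a clique component land in distinct rows ordered increasingly (so that $\uparrow\!\! K_{\mu_s}$ genuinely appears) and verify that the surjectivity half is automatic once every element of $\mathsf{PF}(\mu;\nu)$ is exhibited as a valid $(\sigma,u)$. Since each element of $\mathsf{PF}(\mu;\nu)$ is in particular an element of $\mathsf{PF}(n)$, its preimage under the Loehr--Remmel bijection is a valid pair $(\sigma,u)$ whose associated sorted stable configuration is recurrent by the converse proposition, and one checks that the reading-word constraint forces this configuration to be sorted, closing the loop and confirming that $\Phi$ is a bijection onto $\mathsf{PF}(\mu;\nu)$ exactly.
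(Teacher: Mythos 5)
Your proposal matches the paper's proof essentially step for step: recast $\Phi$ as a map on pairs $(\sigma,u)$ via $\sigma^{-1}(i)-u_{\sigma^{-1}(i)}=n-\widetilde{\kappa}(i)$, invoke the Loehr--Remmel bijection (\Cref{thm:Loehr_Remmel}) together with \Cref{prop:recurrent_inequalities} and its converse for bijectivity, and use \Cref{rmk:kappatilde_increasing} plus sortedness to show the image lies in $\mathsf{PF}(\mu;\nu)$. One direction is flipped in your independent-component bookkeeping (since $\widetilde{\kappa}$ is strictly increasing in the vertex label on $I_{\nu_r}$, for $i<j$ one gets $n-\widetilde{\kappa}(i)>n-\widetilde{\kappa}(j)$, so $i$ sits in a strictly higher row and the labels indeed read as the decreasing word $\downarrow\!\! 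I_{\nu_r}$), but your final conclusions are the correct ones and the argument is sound.
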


In fact \Cref{thm:Loehr_Remmel} also provides the following immediate theorem, which is the main result of this article.

\begin{theorem}
	The map $\Phi$ is a well-defined bijection such that $\area(\Phi(\kappa)) = \lev(\kappa)$ and such that the $\sigma$ obtained from \Cref{alg:toppling} applied to $\kappa$ equals the pmaj word of $\Phi(\kappa)$, so that $\pmaj(\Phi(\kappa)) = \del(\kappa)$.
\end{theorem}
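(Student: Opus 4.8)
The plan is to assemble the final theorem entirely from results already established, since every ingredient has been proved or quoted. The theorem makes three assertions: that $\Phi$ is a well-defined bijection, that $\area(\Phi(\kappa)) = \lev(\kappa)$, and that the toppling word $\sigma$ produced by \Cref{alg:toppling} equals the pmaj word of $\Phi(\kappa)$, whence $\pmaj(\Phi(\kappa)) = \del(\kappa)$. The first assertion is precisely the content of the preceding proposition, so I would simply invoke it. The remaining two are level/area and delay/pmaj identities, which I would extract from \Cref{thm:Loehr_Remmel} after matching up the relevant quantities.

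First I would establish $\area(\Phi(\kappa)) = \lev(\kappa)$. By \Cref{thm:Loehr_Remmel}, under $\Phi$ the quantity $\binom{n+1}{2} - \sum_{j=1}^n (j - u_j)$ becomes the area of the image. Using the defining relation $u_{\sigma^{-1}(i)} = \sigma^{-1}(i) + \widetilde{\kappa}(i) - n$, I would reindex the sum $\sum_j (j - u_j)$ over vertices $i$ and simplify $\sum_i (n - \widetilde{\kappa}(i))$, then convert back to $\kappa$ using the shifts $\widetilde{\kappa}(v^{(s)}_j) = \kappa(v^{(s)}_j) + \nu_s - j$ that define $\widetilde{\kappa}$ on the independent components. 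The total shift contributed by $I_{\nu_s}$ is $\sum_{j=1}^{\nu_s}(\nu_s - j) = \binom{\nu_s}{2}$, so $\sum_i \widetilde{\kappa}(i) = \sum_i \kappa(i) + \sum_s \binom{\nu_s}{2}$. Feeding this into the area expression and comparing with the definition $\lev(\kappa) = -\lvert E_s(\widehat{G}_{\mu,\nu})\rvert + \sum_i \kappa(i)$, together with \Cref{rmk:level_Gmunu} giving $\lvert E_s(\widehat{G}_{\mu,\nu})\rvert = \binom{n}{2} - \sum_s \binom{\nu_s}{2}$, the binomial bookkeeping should collapse to the identity $\area(\Phi(\kappa)) = \lev(\kappa)$.

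Next I would handle the delay/pmaj identity. \Cref{thm:Loehr_Remmel} directly states that $\sigma$ is the pmaj word of $\Phi(\kappa)$, and that $\maj(\sigma_n \sigma_{n-1} \cdots \sigma_1)$ goes into pmaj; that is, $\pmaj(\Phi(\kappa)) = \maj(\sigma_n \cdots \sigma_1)$. On the sandpile side, \Cref{rmk:delay} records that $\del(\kappa) = \maj(\sigma_n \sigma_{n-1} \cdots \sigma_1)$ for the output word $\sigma$ of \Cref{alg:toppling}. Since the $\sigma$ in both statements is the very same toppling word (the map $\Phi$ is phrased through this $\sigma$ via \eqref{eq:usigma}), the two major-index expressions coincide, giving $\pmaj(\Phi(\kappa)) = \del(\kappa)$ immediately.

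The only genuine work is the area computation, so I expect the binomial-coefficient bookkeeping in the first identity to be the main (though still routine) obstacle: one must be careful that the $\widetilde{\kappa}$-shifts on the independent components account exactly for the $\sum_s \binom{\nu_s}{2}$ discrepancy between $\lvert E_s\rvert$ and $\binom{n}{2}$, and that the $\binom{n+1}{2}$ and $\sum_j j$ terms reconcile with the $-\binom{n}{2}$ in the level. Everything else is a direct citation of \Cref{thm:Loehr_Remmel}, \Cref{rmk:delay}, and the preceding proposition, so the proof should be short.
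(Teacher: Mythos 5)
Your proposal is correct and follows essentially the same route as the paper: the $\pmaj$/$\del$ identity is obtained by matching \Cref{rmk:delay} with the $\maj(\sigma_n\cdots\sigma_1)$ statement in \Cref{thm:Loehr_Remmel}, and the $\area$/$\lev$ identity is the same binomial computation (the paper runs it from $\lev(\kappa)$ to $\binom{n+1}{2}-\sum_i(i-u_i)$ using $\sum_i\widetilde{\kappa}(i)=\sum_i\kappa(i)+\sum_s\binom{\nu_s}{2}$ and \Cref{rmk:level_Gmunu}, exactly the bookkeeping you describe in the opposite direction). No gaps.
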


\begin{proof}
	The statement about $\del$ follows from \Cref{rmk:delay}, while the statement about $\lev$ follows from the following easy computation:
	\begin{align*}
		\lev(\kappa)                          & = - \lvert E_s(\widehat{G}_{\mu,\nu}) \rvert + \sum_{i=1}^n \kappa(i)        \\
		\text{(using \Cref{rmk:level_Gmunu})} & = - \binom{n}{2} + \sum_{i \geq 0} \binom{\nu_i}{2} + \sum_{i=1}^n \kappa(i) \\
		                                      & = n^2 - \binom{n}{2} - n^2 + \sum_{i=1}^n \widetilde{\kappa}(i)              \\
		                                      & = \binom{n+1}{2} - \sum_{i=1}^n (n-\widetilde{\kappa}(i))                    \\
		\text{(using \eqref{eq:usigma})}      & = \binom{n+1}{2} - \sum_{i=1}^n (i-u_i).
	\end{align*}
\end{proof}

Now \Cref{thm:main} is an immediate consequence of this result combined with \Cref{thm:shuffle_pmaj}. It can be checked in the instance of \Cref{ex:bijection} (cf.\ Examples~\ref{ex:algo},~\ref{ex:delay},~\ref{ex:parking_word},~\ref{ex:area}~and~\ref{ex:level}).

\bibliographystyle{amsalpha}
\bibliography{bibliogr}

\newcommand{\etalchar}[1]{$^{#1}$}
\providecommand{\bysame}{\leavevmode\hbox to3em{\hrulefill}\thinspace}
\providecommand{\MR}{\relax\ifhmode\unskip\space\fi MR }
\providecommand{\MRhref}[2]{%
  \href{http://www.ams.org/mathscinet-getitem?mr=#1}{#2}
}
\providecommand{\href}[2]{#2}
\begin{thebibliography}{ADD{\etalchar{+}}14}

\bibitem[ADD{\etalchar{+}}14]{ADDHL_Narayana}
Jean-Christophe Aval, Michele D'Adderio, Mark Dukes, Angela Hicks, and Yvan {Le
  Borgne}, \emph{Statistics on parallelogram polyominoes and a {$q,t$}-analogue
  of the {N}arayana numbers}, J. Combin. Theory Ser. A \textbf{123} (2014),
  271--286. \MR{3157811}

\bibitem[ADDL16]{ADDL_two_op}
Jean-Christophe Aval, Michele D'Adderio, Mark Dukes, and Yvan {Le Borgne},
  \emph{Two operators on sandpile configurations, the sandpile model on the
  complete bipartite graph, and a cyclic lemma}, Adv. in Appl. Math.
  \textbf{73} (2016), 59--98. \MR{3433501}

\bibitem[BGHT99]{Bergeron-Garsia-Haiman-Tesler-Positivity-1999}
F.~Bergeron, A.~M. Garsia, M.~Haiman, and G.~Tesler, \emph{Identities and
  positivity conjectures for some remarkable operators in the theory of
  symmetric functions}, Methods Appl. Anal. \textbf{6} (1999), no.~3, 363--420,
  Dedicated to Richard A. Askey on the occasion of his 65th birthday, Part III.
  \MR{1803316}

\bibitem[BTW87]{BakTangWiesenfeld}
Per Bak, Chao Tang, and Kurt Wiesenfeld, \emph{Self-organized criticality: An
  explanation of the 1/f noise}, Phys. Rev. Lett. \textbf{59} (1987), 381--384.

\bibitem[CDJP19]{CDJP_Kmn}
Filip Cools, Michele D'Adderio, David Jensen, and Marta Panizzut,
  \emph{Brill-{N}oether theory of curves on {$\Bbb P^1\times\Bbb P^1$}:
  tropical and classical approaches}, Algebr. Comb. \textbf{2} (2019), no.~3,
  323--341. \MR{3968740}

\bibitem[CL03]{CoriLeBorgne_Tutte}
Robert Cori and Yvan {Le Borgne}, \emph{The sand-pile model and {T}utte
  polynomials}, Adv. in Appl. Math. \textbf{30} (2003), no.~1-2, 44--52, Formal
  power series and algebraic combinatorics (Scottsdale, AZ, 2001). \MR{1979782}

\bibitem[CL16]{CoriLeBorgne_Kn}
\bysame, \emph{On computation of {B}aker and {N}orine's rank on complete
  graphs}, Electron. J. Combin. \textbf{23} (2016), no.~1, Paper 1.31, 47.
  \MR{3484736}

\bibitem[CM18]{CarlssonMellitShuffle}
Erik Carlsson and Anton Mellit, \emph{A proof of the shuffle conjecture}, J.
  Amer. Math. Soc. \textbf{31} (2018), no.~3, 661--697. \MR{3787405}

\bibitem[CP17]{CoolsPanizzut_Kn}
Filip Cools and Marta Panizzut, \emph{The gonality sequence of complete
  graphs}, Electron. J. Combin. \textbf{24} (2017), no.~4, Paper No. 4.1, 20.
  \MR{3711034}

\bibitem[DDL23]{DeryckeDukesLeBorgneSplitgraphs}
H.~Derycke, M.~Dukes, and Y.~{Le Borgne}, \emph{The sandpile model on the
  complete split graph: q,t-{S}chr\"oder polynomials, sawtooth polyominoes and
  a cyclic lemma}, In preparation (2023).

\bibitem[DL13]{DukesLeBorgne_Kmn}
Mark Dukes and Yvan {Le Borgne}, \emph{Parallelogram polyominoes, the sandpile
  model on a complete bipartite graph, and a {$q,t$}-{N}arayana polynomial}, J.
  Combin. Theory Ser. A \textbf{120} (2013), no.~4, 816--842. \MR{3022616}

\bibitem[DL18]{DAdderioLeBorgne_Kmn}
Michele D'Adderio and Yvan {Le Borgne}, \emph{The sandpile model on {$K_{m,n}$}
  and the rank of its configurations}, S\'{e}m. Lothar. Combin. \textbf{77}
  (2018), Art. B77h, 48. \MR{3870431}

\bibitem[Hag08]{Haglund-Book-2008}
James Haglund, \emph{The {$q$},{$t$}-{C}atalan numbers and the space of
  diagonal harmonics}, University Lecture Series, vol.~41, American
  Mathematical Society, Providence, RI, 2008, With an appendix on the
  combinatorics of Macdonald polynomials. \MR{2371044}

\bibitem[Kli19]{Klivans_Sandpiles}
Caroline~J. Klivans, \emph{The mathematics of chip-firing}, Discrete
  Mathematics and its Applications (Boca Raton), CRC Press, Boca Raton, FL,
  2019. \MR{3889995}

\bibitem[LR04]{LoehrRemmel_pmaj}
Nicholas~A. Loehr and Jeffrey~B. Remmel, \emph{Conjectured combinatorial models
  for the {H}ilbert series of generalized diagonal harmonics modules},
  Electron. J. Combin. \textbf{11} (2004), no.~1, Research Paper 68, 64.
  \MR{2097334}

\bibitem[ML97]{MerinoLopez}
Criel Merino~L\'{o}pez, \emph{Chip firing and the {T}utte polynomial}, Ann.
  Comb. \textbf{1} (1997), no.~3, 253--259. \MR{1630779}

\end{thebibliography}

\end{document}